\newcommand{\comments}[1]{}
\numberwithin{equation}{section}
\def\blfootnote{\xdef\@thefnmark{}\@footnotetext}
\definecolor{orange}{rgb}{1,0.5,0}
\theoremstyle{plain}
\newtheorem{theorem}{Theorem}[section]
\newtheorem{lemma}[theorem]{Lemma}
\theoremstyle{definition}
\newtheorem{dfn}{Definition}[section]
\theoremstyle{remark} 
\newtheorem{remark}{Remark}[section]
\begin{document}
\title[Fatou's web]{Fatou's web}
\author{V. Evdoridou}
\address{Department of Mathematics and Statistics\\ The Open University\\ Walton Hall\\ Milton Keynes MK7 6AA\\ United Kingdom}
\email{vasiliki.evdoridou@open.ac.uk}
\date{\today}

\begin{abstract}
Let $f$ be Fatou's function, that is, $f(z)= z+1+e^{-z}$. We prove that the escaping set of $f$ has the structure of a `spider's web' and we show that this result implies that the non-escaping endpoints of the Julia set of $f$ together with infinity form a totally disconnected set. We also give a well-known transcendental entire function, due to Bergweiler, for which the escaping set is a spider's web and we point out that the same property holds for families of functions.
\end{abstract}

\maketitle
\section{Introduction}

Let $f$ be a transcendental entire function and denote by $f^n, n=0,1,...,$ the $n$th iterate of $f$. The set of points $z \in \mathbb{C}$ for which $(f^n)_{n \in \mathbb{N}}$ forms a normal family in some neighborhood of $z$ is called the \textit{Fatou set} $F(f)$ and the complement of $F(f)$ is the \textit{Julia set} $J(f)$. An introduction to the properties of these sets can be found in \cite{Berg}.

The set $$I(f)= \{z \in \mathbb{C}: f^n(z) \to \infty\;\text{as}\; n \to \infty\}$$ is called the \textit{escaping set} and was first studied for a general transcendental entire function by Eremenko in \cite{Ere}. The set of points that escape to infinity as fast as possible forms a subset of $I(f)$ known as the \textit{fast escaping set} $A(f).$ This set was introduced by Bergweiler and Hinkkanen in \cite{B-H} and is defined as follows:
$$A(f)= \{z: \;\text{there exists}\;\ell \in \mathbb{N}\;\text{such that}\; \lvert f^{n+\ell}(z) \rvert \geq M^n(R,f),\;\text{for}\;n \in \mathbb{N}\},$$
where $$M(r,f)= M(r) = \max_{\lvert z\rvert =r} \lvert f(z)\rvert, \;\;\text{for}\;\;r>0,$$ and $R>0$ is large enough to ensure that $M(r) >r$ for $r \geq R.$

Rippon and Stallard showed that, for several families of transcendental entire functions, $I(f)$ has a particular structure defined as follows (see \cite{Fast}).
\begin{dfn}
A set $E$ is an \textit{(infinite)} \textit{spider's web} if $E$ is connected and there exists a sequence $(G_n)$ of bounded simply connected domains, $n \in \mathbb{N},$ with $G_n \subset~ G_{n+1}$, for $n \in \mathbb{N},$ $\partial G_n \subset E,$ for $ n \in \mathbb{N}$ and $\cup_{n \in \mathbb{N}} G_n= \mathbb{C}.$ 
\end{dfn}
They also proved that if $I(f)$ \textit{contains} a spider's web then it \textit{is} a spider's web (see \cite[Lemma 4.5]{BEconj}). Many examples are now known of functions for which $A(f)$ is a spider's web and hence $I(f)$ is a spider's web (see, for example, \cite{M-P}, \cite{Fast},    \cite{Davesw}). Until now, there was only one known example of a function (a very complicated infinite product) for which $I(f)$ is a spider's web but $A(f)$ is not a spider's web (see \cite[Theorem~1.1]{BEconj}).

In this paper we show that this property also holds for the function $f(z)= z+1+~e^{-z}.$ This function was first studied by Fatou in \cite{Fatou} and for this reason it is sometimes called Fatou's function (see \cite{KU}). Fatou showed that $F(f)$ consists of one invariant component~$U$ which is a Baker domain, that is, $f^n \to \infty$ in $U$, and $U$ contains the right half-plane. In fact, $f$ was the first example of a function with a Baker domain. The Julia set is an uncountable union of curves in the left half-plane (a Cantor bouquet) (see Figure 1). Finally, $I(f)$ consists of all the points in the complex plane except for some of the endpoints of the curves in $J(f),$ and it is connected, since $U \subset I(f)\subset \overline{U}= \mathbb{C}.$ The following theorem shows that it is in fact a spider's web.

\begin{figure}
\centering
\includegraphics[width=.7\linewidth]{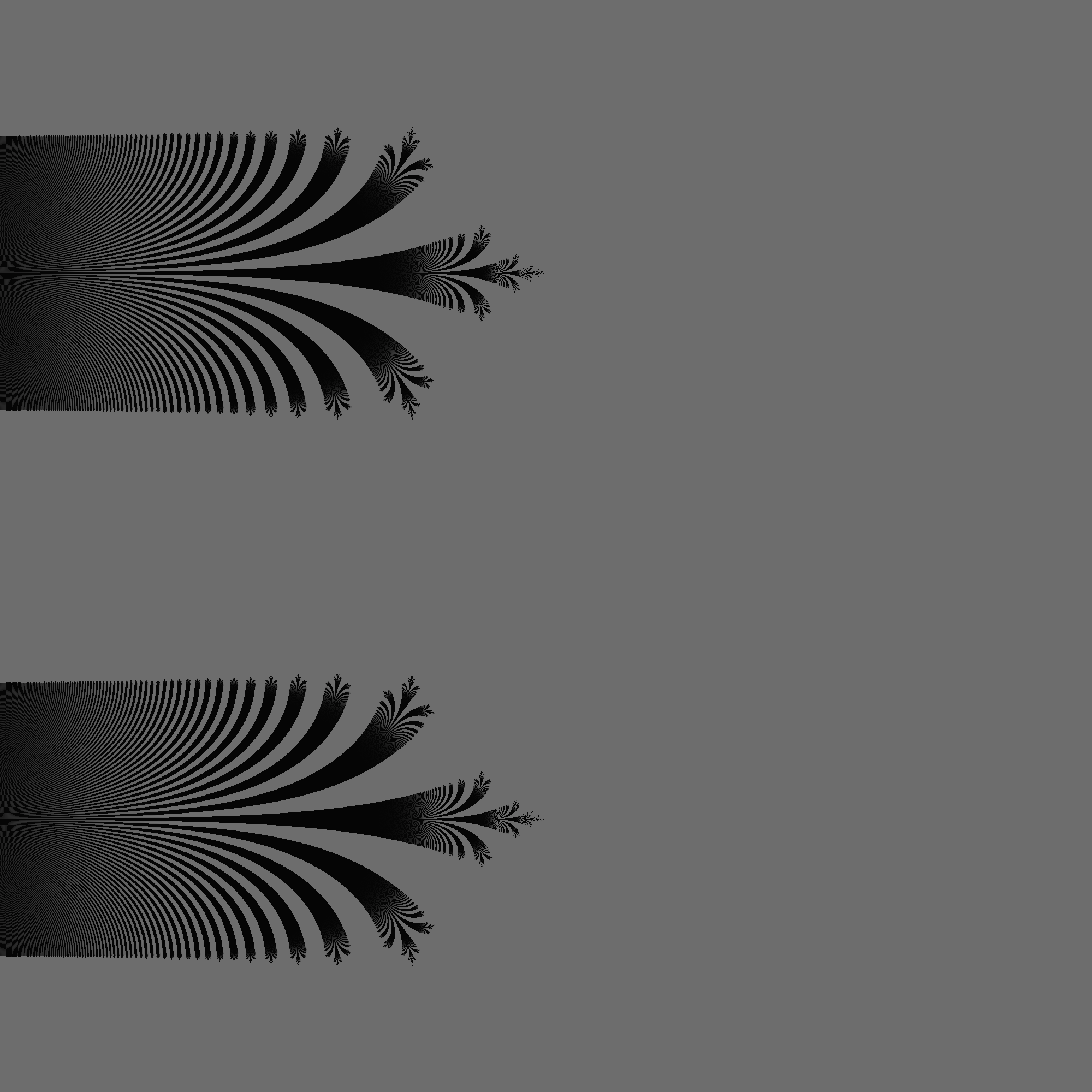}
\caption{$J(f)$ for $f(z)= z+1+e^{-z}$}
\end{figure}

\begin{theorem}
\label{Main}
Let $f(z)= z+1+ e^{-z}$. Then $I(f)$ is a spider's web.
\end{theorem}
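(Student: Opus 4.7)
The plan is to apply the criterion of Rippon and Stallard~\cite[Lemma~4.5]{BEconj}, by which it suffices to show that $I(f)$ \emph{contains} a spider's web---that is, a sequence of Jordan curves $\gamma_n \subset I(f)$, each surrounding $0$, with $\gamma_n$ contained in the bounded complementary component of $\gamma_{n+1}$ and $\min_{z\in\gamma_n}|z|\to\infty$.

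The crucial structural input is a family of invariant horizontal lines lying inside the Baker domain $U$. For each $k\in\mathbb{Z}$ set $\ell_k=\{z:\Im z=2k\pi\}$. Since $e^{-2k\pi i}=1$ one has
$$f(x+2k\pi i)=(x+1+e^{-x})+2k\pi i,$$
so $\ell_k$ is invariant. The real function $x\mapsto x+1+e^{-x}$ attains its minimum value $2$ at $x=0$, hence $f(\ell_k)\subset\{\Re z\ge 2\}$ lies in the right half-plane, which is contained in $U$. By backward invariance of the Fatou set, together with the fact that $U$ is the only Fatou component, $\ell_k\subset U\subset I(f)$ for every $k$.

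For large $n$ I would take $\gamma_n$ to be the boundary of a large topological rectangle. With parameters $K_n,R_n\to\infty$, $\gamma_n$ consists of a segment of $\ell_{K_n}$ (top, in $U$), a segment of $\ell_{-K_n}$ (bottom, in $U$), the vertical segment $\{R_n+iy:|y|\le 2K_n\pi\}$ in the right half-plane (right, in $U$), and a Jordan arc from $-R_n-2K_n\pi i$ to $-R_n+2K_n\pi i$ which must be built inside $I(f)$ (left). Provided $R_n,K_n$ grow suitably, the resulting curves are nested and exhaust $\mathbb{C}$.

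The main obstacle is constructing the left side. A straight vertical segment at $\Re z=-R_n$ meets the Cantor bouquet $J(f)$, and at every non-endpoint of a hair the orbit escapes to $-\infty$ along the hair, placing the point in $I(f)$; the only possible obstructions are non-escaping endpoints. My plan is to subdivide the left side using the horizontal highways $\ell_j$ for $-K_n\le j\le K_n$ (all in $U$), so that within each strip $\{2j\pi<\Im z<2(j+1)\pi\}$ only a short vertical arc in $I(f)$ is needed; by the $2\pi i$-periodicity of $f$ the task reduces to producing a single such arc in one fixed strip. Inside a strip I would exploit the openness of $U$ together with the curve-structure of the hairs of $J(f)$ to perturb the segment through small detours inside $U$ around any non-escaping endpoint encountered, keeping the resulting arc simple and contained in $\{\Re z\le -R_n+\delta\}$ for some small $\delta>0$ so that the assembled $\gamma_n$ remains a Jordan curve surrounding $0$. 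Justifying that these detours can always be chosen---i.e.\ that the non-escaping endpoints form a topologically removable subset of the segment---is where I expect the bulk of the work.
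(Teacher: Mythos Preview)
Your reduction via \cite[Lemma~4.5]{BEconj} is the right starting point, and three sides of your rectangle are fine: the lines $\ell_k\subset U$ and the right vertical segment really do lie in $I(f)$. The gap is entirely in the left side, and your proposed method does not close it. What is needed is, for each large $R$, a Jordan arc in $I(f)\cap\{\Re z\le -R+\delta\}$ crossing the strip $\{2j\pi<\Im z<2(j+1)\pi\}$. Connectedness and openness of $U$ certainly give an arc in $U$ between the two boundary lines, but with no control on its location: a priori it may have to pass through the right half-plane, which would destroy the nesting of the $\gamma_n$. Your plan to make ``small detours'' around each non-escaping endpoint is not a proof: non-escaping endpoints (for example repelling periodic points, which are dense in $J(f)$) occur with arbitrarily negative real part, a vertical segment may meet uncountably many of them, and nothing you have established bounds the diameter of the required detours. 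Note too that the paper deduces the total disconnectedness of $\hat E(f)\cup\{\infty\}$ \emph{from} Theorem~\ref{Main}, so you cannot invoke any topological smallness of the non-escaping endpoints here without circularity.

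The paper sidesteps this difficulty by a genuinely different mechanism. Instead of $I(f)$ it works with the \emph{closed} set $I\bigl(f,((n+m)/2)\bigr)=\{z:|f^n(z)|\ge(n+m)/2\text{ for all }n\}$, whose complement is open, and shows that every component of this complement is bounded. The argument is dynamical rather than topological: assuming an unbounded complementary component, one finds inside it a short curve $\gamma_0$ far to the left in some strip $S_{0,j_0}$; the estimate $|f(z)|\asymp e^{-\Re z}$ then shows inductively that $f(\gamma_k)$ contains a sub-curve $\gamma_{k+1}$ lying still further left in another strip, and the pullback Lemma~\ref{R+S} produces a point of $\gamma_0$ whose orbit grows at least like $(n+m)/2$, a contradiction. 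The general criterion Theorem~\ref{strongsw} then yields a spider's web inside $I\bigl(f,((n+m)/2)\bigr)\subset I(f)$. The idea you are missing is thus to pass to a uniform-escape-rate subset of $I(f)$ whose complement can be controlled by iterating $f$, rather than to attempt a direct geometric construction of loops in $I(f)$ itself.
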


Theorem \ref{Main} gives a positive answer to a question of Rippon and Stallard (\cite[Problem 9]{workshop}) and has some interesting consequences discussed in Section 5. Note that $A(f)$ is not a spider's web, in this case, since it consists of the curves in $J(f)$ except for some of their endpoints and so it is not connected \cite[Example 3]{Slow}.

In fact we can prove a stronger result than Theorem \ref{Main} and in order to state this we consider points which escape to infinity at a uniform rate.

Let $f$ be a transcendental entire function and $(a_n)$, $n \in \mathbb{N},$ a positive  sequence  such that $a_n \to \infty$ as $n \to \infty.$ Let $I(f,(a_n))$ be the subset of $I(f)$ defined as follows:
$$I(f, (a_n))= \{z \in \mathbb{C}: \lvert f^n(z) \rvert \geq a_n,\;\text{for}\; n \in \mathbb{N}\}.$$

Now consider the sequence
$$\left(\frac{n+m}{2}\right)= \frac{1+m}{2}, \frac{2+m}{2},...,$$
where $m \in \mathbb{N}.$ We have the following theorem.

\begin{theorem}
\label{bdcomp}
Let $f(z)= z+1+e^{-z}$. Then 
$I(f,((n+m)/2))$ contains a spider's web, for all $m \in \mathbb{N}.$

\end{theorem}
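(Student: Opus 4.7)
The plan is to construct, for each $m$, a sequence of Jordan curves $\gamma_N \subset I(f,((n+m)/2)) =: S_m$ surrounding arbitrarily large disks; taking $G_N$ to be the bounded component of $\mathbb{C} \setminus \gamma_N$ (with the sequence arranged so that $G_N \subset G_{N+1}$) gives the required exhaustion of $\mathbb{C}$ by simply connected domains with $\partial G_N \subset S_m$, proving that $S_m$ contains a spider's web.

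Two basic estimates provide the right-hand and horizontal sides of each $\gamma_N$. First, for $z = x + iy$ with $x \geq \ln 2$, the identity $\Re f(z) = x + 1 + e^{-x}\cos y$ gives $\Re f(z) \geq x + 1 - e^{-x} \geq x + 1/2$, and induction yields $\Re f^k(z) \geq x + k/2$. Setting $a := \max(\ln 2, m/2)$, every point in the half-plane $\{\Re z \geq a\}$ satisfies $|f^k(z)| \geq a + k/2 \geq (k+m)/2$ for all $k$, so this half-plane lies in $S_m$. Second, by the $2\pi i$-periodicity of $f$, on each horizontal line $\Im z = 2N\pi$ the function $e^{-z}$ is real and positive, so $f$ preserves the line and acts on the real part by $g(x) = x + 1 + e^{-x}$. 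Since $g$ has minimum $g(0) = 2$ and satisfies $g(y) \geq y + 1$ for $y \geq 0$, one gets $g^k(x) \geq k + 1$ for every $k \geq 1$ and every $x \in \mathbb{R}$. Combined with $|\Im f^k(z)| = 2N\pi$, this gives $|f^k(z)| \geq \max(g^k(x), 2N\pi) \geq (k+m)/2$ for all $k \geq 0$ and all $z$ on the line, provided $N$ is large in terms of $m$. So the lines $\Im z = \pm 2N\pi$ also lie in $S_m$, and will serve as the top and bottom pieces of $\gamma_N$.

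The main technical obstacle is the left connector, which must join the points $-L_N + 2N\pi i$ and $-L_N - 2N\pi i$ through a path inside $S_m$ in order to close $\gamma_N$ into a Jordan curve. A naive vertical segment at $\Re z = -L_N$ does not suffice: near values of $\Im z$ at which $\cos(\Im z)$ is close to $-1$, the first iterate $f(z)$ has real part approximately $-L_N + 1 - e^{L_N}$ and subsequent iterates can fail the bound for specific "resonant" imaginary parts arising from cancellations in $f^k$. The plan is to show that for $L_N$ sufficiently large, the set of exceptional $\Im z \in [-2N\pi, 2N\pi]$ with $-L_N + i\,\Im z \notin S_m$ is nowhere dense, so the vertical segment can be perturbed inside $\mathbb{C}$ by short detours to produce a simple arc lying entirely in $S_m$. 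The key tool is the recursive containment $S_m \supset f^{-1}(S_{m+1}) \cap \{|z| \geq m/2\}$, which lets us iteratively pull back the half-plane estimates of the previous paragraph and verify that $S_m$ is "thick" around the proposed left segment: for a generic $z$ of large modulus on this segment, $f(z)$ lies deep in a half-plane of the form $\{\Re w \geq \max(\ln 2,(m+1)/2)\} \subset S_{m+1}$. The careful verification that these local perturbations combine into a global simple arc - so that $\gamma_N$ is genuinely a Jordan curve - is the heart of the proof; once this is done, $\gamma_N$ encloses the disk $\{|z| \leq 2N\pi\}$, and letting $N \to \infty$ produces the spider's web contained in $S_m$.
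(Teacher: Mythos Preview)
Your right-hand vertical side and the two horizontal lines $\Im z=\pm 2N\pi$ are exactly the content of the paper's Lemma~\ref{Inm}, and those parts are fine. The entire difficulty lies in the left connector, and there your sketch has a genuine gap.

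First, a topological slip. Since $S_m$ is closed, its complement is open, so the exceptional set $\{y\in[-2N\pi,2N\pi]:-L_N+iy\notin S_m\}$ is relatively open in the interval. A relatively open subset of an interval is nowhere dense only if it is empty; thus your ``nowhere dense'' claim, read literally, asserts that the whole vertical segment lies in $S_m$, which you yourself note is false.

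More seriously, the perturbation step---``short detours'' around bad points---presupposes that every component of $S_m^{\,c}$ meeting the segment is bounded (indeed small). But that is precisely the substance of the theorem. Your recursive containment $S_m\supset f^{-1}(S_{m+1})\cap\{|z|\ge m/2\}$ is correct, and it handles the points on the segment whose first image lands in the right half-plane. When $\cos y<0$, however, $f(-L_N+iy)$ lands far to the left with essentially arbitrary imaginary part, and you must iterate; the ``bad'' set then acquires a Cantor-type structure across all scales, and nothing in your outline rules out an unbounded component of $S_m^{\,c}$ blocking every path from $-L_N+2N\pi i$ to $-L_N-2N\pi i$.

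The paper proceeds by a different, indirect route. It assumes $S_m^{\,c}$ has an unbounded component $U$, chooses a short curve $\gamma_0\subset U$ inside a far-left half-strip, and shows inductively that $f(\gamma_k)$ contains a curve $\gamma_{k+1}$ of the same shape in a half-strip one level further out. A standard covering lemma (Lemma~\ref{R+S}) then yields $\zeta\in\gamma_0$ with $f^k(\zeta)\in\gamma_k$ for all $k$, forcing $\zeta\in S_m$ and giving a contradiction. Once all complementary components are known to be bounded, the spider's web follows from the abstract Theorem~\ref{strongsw} rather than from explicit loops. In effect the paper proves exactly the fact your detour argument needs, but by iterating curves in the \emph{complement} forward, not by threading a path through $S_m$ directly.
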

Theorem \ref{bdcomp} shows that $I(f)$ contains a spider's web and hence it is a spider's web (see \cite[Lemma 4.5]{BEconj}), and so Theorem \ref{Main} follows.

An approximation to the set $I(f,((n+m)/2))$, for $f(z)= z+1+ e^{-z}$ and $m=6$, is shown in Figure 2. The set of white points as well as light grey points of $J(f)$ is the complement of $I(f,((n+6)/2))$. Note that, since the boundary of the largest visible complementary component of $I(f,((n+6)/2))$ lies in $I(f,((n+6)/2))$, Figure 2 shows a loop in $I(f)$ that surrounds some of the non-escaping endpoints of $J(f).$

\begin{figure}
\centering
\includegraphics[width=.7\linewidth]{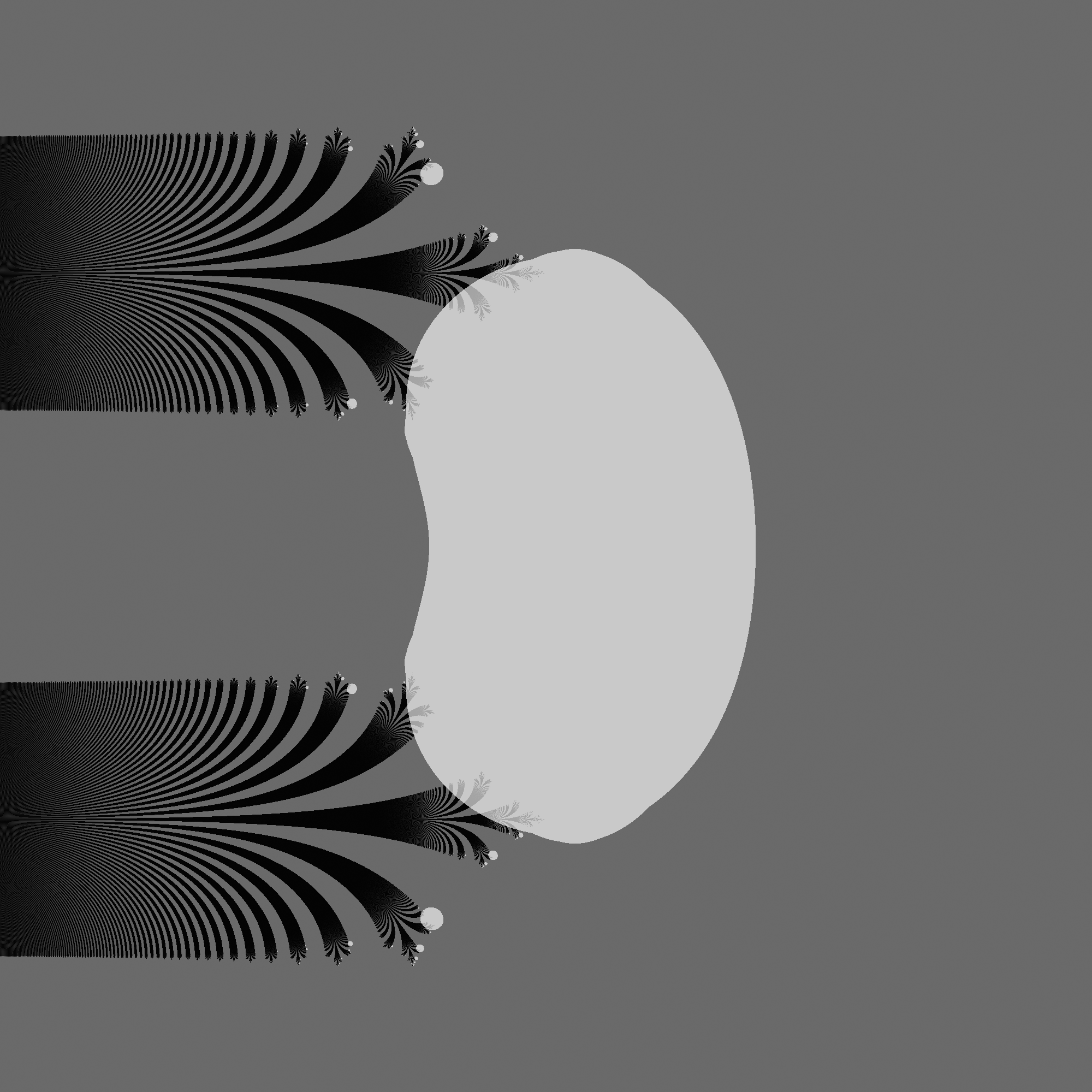}
\caption{Complementary components of $I(f,((n+6)/2))$ for $f(z)= z+1+e^{-z}$}
\end{figure}

We have the following general sufficient condition for $I(f)$ to be a spider's web, which can be used to prove Theorem \ref{Main}.

\begin{theorem}
\label{Isw}
Let $f$ be a transcendental entire function. If $I(f,(a_n))$ is defined as above,  the disc $D(0, a_n)$ contains a periodic cycle of $f$, for all $n \in \mathbb{N},$ and $I(f,(a_n))^c$ has a bounded component, then $I(f)$ is a spider's web.
\end{theorem}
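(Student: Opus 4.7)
The plan is to prove the stronger statement that $I(f)$ \emph{contains} a spider's web; by \cite[Lemma 4.5]{BEconj} this will imply that $I(f)$ is itself a spider's web. Concretely, the goal is to construct an increasing sequence of bounded simply connected domains $(G_n)$ with $\partial G_n \subset I(f)$ and $\bigcup_n G_n = \mathbb{C}$.

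\textbf{Initial loop.} Let $L_0$ denote the bounded component of $I(f,(a_n))^c$ provided by the hypothesis. Since $I(f,(a_n))$ is closed, $\partial L_0 \subset I(f,(a_n)) \subset I(f)$, so the topological hull $G_0$ of $L_0$ (i.e., the complement of the unbounded component of $\mathbb{C} \setminus L_0$) is a bounded simply connected domain with $\partial G_0 \subset I(f)$.

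\textbf{Pulling back.} For each $N \geq 0$, introduce the closed set $E_N := f^{-N}(I(f,(a_n)))$. By complete invariance of $I(f)$ under $f$, $E_N \subset I(f)$, and the $E_N$ form an increasing chain. If $L$ is any bounded component of the open set $E_N^c$, then, exactly as in the initial step, $\partial L \subset E_N \subset I(f)$, and the hull of $L$ gives a bounded simply connected domain with boundary in $I(f)$. Moreover, every periodic cycle of $f$ lies in $E_N^c$ for every $N$, because a periodic orbit is bounded while $a_n \to \infty$; combined with the assumption that $D(0,a_n)$ contains a periodic cycle for each $n$, this guarantees that $E_N^c$ has points at every scale.

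\textbf{Enlargement and assembly.} The crux is to show that for every $R > 0$ there exists $N$ and a bounded component of $E_N^c$ containing $D(0,R)$. The idea is that iterated preimages of the seed bounded component $L_0$ under $f^N$ produce an abundance of candidate loops in $E_N$, and a topological argument---pairing the seed loop $\partial L_0$ with the periodic cycles supplied by the hypothesis in each disc $D(0,a_N)$---is used to show that the component of $E_N^c$ containing a chosen periodic cycle inside $D(0,R)$ is bounded and encircles $D(0,R)$. Taking hulls of such components for a sequence $R_n \to \infty$, and replacing each by its union with the previous ones to ensure nesting, yields the desired sequence $(G_n)$. A connected spider's web inside $I(f)$ is then obtained by linking the successive boundaries $\partial G_n$ through curves in $I(f)$ lying in the annular regions $G_{n+1} \setminus \overline{G_n}$.

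\textbf{Main obstacle.} The delicate point is the enlargement step: translating the existence of a single seed bounded component together with the pointwise cycle hypothesis into bounded components of $E_N^c$ at arbitrarily large scales. This requires a careful topological analysis of how the iterated preimages of $L_0$ are organized in $\mathbb{C}$ and how they interact with the preimages of periodic cycles, so as to ensure that the components containing the chosen cycles remain bounded while growing to exhaust any prescribed compact set.
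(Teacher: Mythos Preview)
Your pull-back strategy is not the paper's, and the step you yourself flag as the ``main obstacle'' is not merely delicate---it is false in the intended applications.

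Concretely, your enlargement step asks that for every $R>0$ there exist $N$ with a bounded component of $E_N^c$ containing $D(0,R)$; in particular this would force $D(0,R)\cap E_N=\emptyset$. But whenever $(a_n)$ is increasing (as in every use of the theorem in this paper) one has $I(f,(a_n))=E_0\subset E_N$ for all $N$, so $D(0,R)\cap E_N\supset D(0,R)\cap I(f,(a_n))$. For Fatou's function with $a_n=(n+m)/2$, the set $I(f,(a_n))$ contains the half-plane $\{\Re z\ge m\}$, and hence meets $D(0,R)$ for every $R>m$; no choice of $N$ can make the intersection empty. More generally, $I(f,(a_n))$ already contains the compact set $\partial L_0$, so $D(0,R)\cap E_N\neq\emptyset$ once $R$ exceeds $\max\{|z|:z\in\partial L_0\}$. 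The preimage sets $E_N^c$ are \emph{decreasing} in $N$, so iterating backwards only shrinks the available complementary components; it cannot produce large loops.

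The paper goes in the opposite direction: it \emph{pushes forward}. With $G$ the given bounded component of $I(f,(a_n))^c$, set $G_n:=\widetilde{f^n(G)}$, the topological hull of the $n$-th forward image. The single observation $\partial G\subset I(f,(a_n))$ gives immediately
\[
\partial G_n\subset f^n(\partial G)\subset\{z:|z|\ge a_n\},
\]
so the loops $\partial G_n$ are forced outward. Since some $z\in G$ satisfies $|f^N(z)|<a_N$, the hull $G_N$ must then engulf $D(0,a_N)$; the periodic-cycle hypothesis, combined with the standard inclusion $f(\widetilde{K})\subset\widetilde{f(K)}$ for compact connected $K$, propagates this to all $n\ge N$. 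Extracting a nested subsequence and linking the loops via an unbounded component of $I(f)$ (which always exists) finishes the argument. The moral: membership of $\partial G$ in $I(f,(a_n))$ is precisely a lower bound on $|f^n|$ along $\partial G$, so forward images of $\partial G$, not preimages of $I(f,(a_n))$, are what grow.
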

In order to prove Theorem \ref{bdcomp} we need the following general result.
\begin{theorem}
\label{strongsw}
Let $f$ be a transcendental entire function. If the hypotheses of Theorem \ref{Isw} hold, $(a_n)$ is increasing and $a_{n+1} \leq M(a_n)$, for $n \in \mathbb{N},$ then $I(f,(a_n))$ contains a spider's web.
\end{theorem}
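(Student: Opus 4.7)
The plan is to construct the required nested family of bounded simply connected domains inductively, using the shifted sequences $(a_{n+m})_{n\geq 1}$ for $m=0, 1, 2, \ldots$. For the base case, let $L_0$ be the bounded component of $I(f,(a_n))^c$ given by the hypothesis, and let $G_0$ be its topological hull (obtained by filling in all bounded complementary components). Since $I(f,(a_n))$ is the intersection of the closed sets $\{|f^n|\geq a_n\}$, it is closed, so $L_0$ is an open connected component of the open set $I(f,(a_n))^c$, with $\partial L_0\subset I(f,(a_n))$; consequently $G_0$ is a bounded simply connected domain with $\partial G_0\subset I(f,(a_n))$.

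For $m\geq 1$, I would define $L_m$ to be the connected component of $I(f,(a_{n+m}))^c$ containing $L_0$, and let $G_m$ be its topological hull. Since $(a_n)$ is increasing, $a_{n+m}\geq a_n$, so $I(f,(a_{n+m}))\subset I(f,(a_n))$ and $I(f,(a_{n+m}))^c\supset I(f,(a_n))^c\supset L_0$; hence $L_m$ is well-defined and the $L_m$ form an increasing family, yielding $G_0\subset G_1\subset\cdots$. Moreover $\partial G_m\subset I(f,(a_{n+m}))\subset I(f,(a_n))$, giving the boundary inclusion required for a spider's web in $I(f,(a_n))$.

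Two facts remain to verify: each $G_m$ is bounded, and $\bigcup_m G_m=\mathbb{C}$. For the exhaustion, any $z\in\mathbb{C}$ lies in $I(f,(a_{n+m}))^c$ for $m$ sufficiently large (take $n=1$ and $m$ so large that $a_{m+1}>|f(z)|$, using that $a_n\to\infty$); one then argues, via connectedness within the nested sets $I(f,(a_{n+m}))^c$ and using the growth $a_{n+1}\leq M(a_n)$, that $z\in L_m$ rather than in some distant component. The main obstacle is the boundedness of each $L_m$: a priori, $\partial L_0\subset I(f,(a_n))$ only guarantees $|f^n|\geq a_n$ on this boundary, so this wall around $L_0$ may become porous in the weaker set $I(f,(a_{n+m}))$, potentially allowing $L_m$ to merge with unbounded tracts of $I(f,(a_{n+m}))^c$. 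Ruling this out will require the hypothesis that $D(0,a_{n+m})$ contains a periodic cycle for every $n$ together with the growth control $a_{n+1}\leq M(a_n)$, which together should prevent the wall from breaking and force $L_m$ to remain bounded. I expect this boundedness step, where the interplay between the sizes of $a_{n+m}$ and the growth of $f$ near infinity is delicate, to be the main technical difficulty of the proof.
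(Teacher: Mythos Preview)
Your approach is genuinely different from the paper's, and the step you yourself flag as the main obstacle---boundedness of $L_m$---is a real gap that you have not closed. Nothing in the hypotheses prevents the component of $I(f,(a_{n+m}))^c$ containing $L_0$ from becoming unbounded as $m$ grows: the periodic-cycle condition and the inequality $a_{n+1}\le M(a_n)$ give no obvious control here, and your sketch offers no mechanism. The exhaustion step has the same problem: knowing that $z\in I(f,(a_{n+m}))^c$ for large $m$ does not place $z$ in (or inside the hull of) the particular component $L_m$ containing $L_0$. Finally, you never address connectivity: a spider's web must lie in a \emph{connected} set, so even if your $G_m$ worked you would still need an unbounded connected piece of $I(f,(a_n))$ meeting all the $\partial G_m$.

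The paper avoids all of this by iterating forward rather than passing to shifted static complements. Starting from the bounded component $G$ of $I(f,(a_n))^c$, it sets $G_n=\widetilde{f^n(G)}$. Boundedness of each $G_n$ is then automatic (continuous image of a bounded set, then fill). Since $\partial G\subset I(f,(a_n))$, one has $\partial G_n\subset f^n(\partial G)\subset\{|z|\ge a_n\}$; and because some point of $G$ satisfies $|f^N(z)|<a_N$, the domain $G_N$ must contain $D(0,a_N)$. The periodic cycle in every $D(0,a_n)$ then forces $G_n\supset D(0,a_n)$ for all $n\ge N$, which yields the exhaustion. The extra hypotheses of Theorem~\ref{strongsw} are used exactly where one would expect: monotonicity of $(a_n)$ gives $f^m(\partial G)\subset I(f,(a_{n+m}))\subset I(f,(a_n))$, so $\partial G_n\subset I(f,(a_n))$; and $a_{n+1}\le M(a_n)$ is used only to show $A_R(f)\subset I(f,(a_n))$ for suitable $R$, furnishing an unbounded component of $I(f,(a_n))$ that meets all the $\partial G_{n_k}$ and supplies the connectivity. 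In short, the paper's forward-iteration scheme gives boundedness and exhaustion essentially for free, whereas your static scheme leaves both as unresolved difficulties.
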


In Section 2 we give the proofs of Theorems \ref{Isw} and \ref{strongsw}. In Section 3 we prove some lemmas that we need in the proof of Theorem \ref{bdcomp}, which is given in Section~ 4.

In Section 5 we discuss an interesting consequence of Theorem \ref{Main} concerning the non-escaping endpoints of $J(f)$. The study of the endpoints and the escaping endpoints of $J(f)$ for the exponential family has been of great interest for several years (see discussion in Section 5). The fact that $I(f)$ is a spider's web for Fatou's function means that we can create loops in $I(f)$ which avoid the non-escaping endpoints of $J(f).$ Hence we are able to show that the set of non-escaping endpoints together with infinity is a totally disconnected set. We thank Lasse Rempe-Gillen for pointing out that this result is a consequence of Theorem \ref{Main}.

Finally, in Section 6 we adapt the method from the proof of Theorem \ref{bdcomp} in order to show that $I(f)$ is a spider's web for another well known function, namely, $f(z)= 2z+2- \log 2 -e^z.$ The function $f(z)= 2z+2- \log 2-e^z$ was first studied by Bergweiler in \cite{InvD} who showed that it has a Baker domain that contains a left half-plane and also has several other interesting dynamical properties. Note that our result implies that $I(f)$ is a connected set in this case, a property that was not known before.

We end the Introduction with a remark about families of functions each of which behaves in a similar way to one of the two functions we study here and hence their escaping sets are also spiders' webs.

\begin{remark}
Let $\{f: f(z)=az+b+ce^{dz}\}$ where $a,b,c,d \in \mathbb{R}$ be a family of transcendental entire functions such that:
\begin{itemize}
\item[(a)] $a=1,\; c\neq 0,\; bd<0$; or
\item[(b)] $a>1,\; c,d \neq 0.$
\end{itemize}
Then similar arguments to those used in Section 4 and Section 6 show that for the families of functions defined in (a) and (b), respectively, $I(f)$ is a spider's web.
\end{remark}

\begin{remark}
A set related to $I(f, ((n+m)/2))$ was used by Bergweiler and Peter in \cite{B-P} in relation to the Hausdorff measure for entire functions. More precisely, they considered the set 
$$\textrm{Unb}(f, (p_n))= \{z \in \mathbb{C}: \lvert f^n(z) \rvert > p_n,\;\text{for infinitely many}\; n \},$$
where $(p_n)$ is a real sequence tending to infinity.
\end{remark}
\textit{Acknowledgments.} I would like to thank my supervisors Prof. Phil Rippon and Prof. Gwyneth Stallard for their patient guidance and all their help with this paper and Prof. Lasse Rempe-Gillen for a nice observation about a consequence of Theorem \ref{Main}. I would also like to thank David Mart\'{i}-Pete for producing the pictures in the Introduction.

\section{Results on spiders' webs}
In this section we give the proofs of Theorems \ref{Isw} and \ref{strongsw}.
\begin{proof}[Proof of Theorem \ref{Isw}]
Let $G$ be a bounded component of $I(f,(a_n))^c.$ As $I(f,(a_n))$ is a closed set,   we deduce that $G$ is open and
\begin{equation}
\label{boundary}
\partial G \subset I(f,(a_n)) \subset I(f).
\end{equation}
 Now let $G_n= \widetilde{f^n(G)},$ where $\widetilde{U}$ denotes the union of $U$ and its bounded complementary components. (The notation $T(U)$ is often used to describe the set $\widetilde{U}$ --see  for example \cite{Bell}, \cite{qr}). We show that there exists a sequence $(n_k)$ such that 
\begin{equation}
\label{Gnsub} 
 G_{n_k} \subset G_{n_{k+1}}\;\text{and}\;\partial G_{n_k} \subset I(f),\;\text{for}\;k \in \mathbb{N},\;\text{and}\;\; \cup_{k} G_{n_k}= \mathbb{C}.
\end{equation} 
  First note that, for each $n \in \mathbb{N},$ $\partial G_n \subset f^n(\partial G),$ since $G$ is a bounded domain. Also, as $\partial G \subset I(f,(a_n))$, we have that 
$$f^n(\partial G) \subset \{z: \lvert z \rvert \geq a_n\}$$ and hence
\begin{equation}
\label{boundary2}
\partial G_n \subset \{z: \lvert z \rvert \geq a_n\},\;\text{for}\;n \in \mathbb{N}.
\end{equation}

In order to show that $\partial G_n$ surrounds the disc $D(0, a_n)$ for $n$ large enough, note that $G \subset I(f,(a_n))^c$ and so, for each point $z$ in $G,$ there exists $N \in \mathbb{N}$ such that $\lvert f^N(z)\rvert < a_N.$  It follows from (\ref{boundary2}) that $\partial G_N$ surrounds $D(0, a_N)$. Since the disc $D(0,a_n)$ contains a periodic cycle, for all $n \in \mathbb{N},$ it then follows from (\ref{boundary2}) that
\begin{equation}
\label{gn}
G_n \supset D(0, a_n),\;\;\text{ for all}\;\; n \geq N.
\end{equation}
Note now that, for $n \in \mathbb{N},$
\begin{equation}
\label{gn2}
\partial G_n \subset f^n(\partial G) \subset I(f),
\end{equation}
by (\ref{boundary}).
It follows from (\ref{gn}) and (\ref{gn2}), together with the fact that $a_n \to \infty$ as $n \to \infty$, that we can find a sequence $(n_k)$ such that $(G_{n_k})$ satisfies (\ref{Gnsub}).

We know that $I(f)$ has at least one unbounded component, $I_0$ say (see \cite[Theorem 1]{F-E}). Clearly $I_0$ must meet $\partial G_{n_k}$ for all sufficiently large $k$. Hence, by (\ref{Gnsub}), there exists a subset of $I(f)$ which is a spider's web and this implies that $I(f)$ is a spider's web (see \cite[Lemma 4.5]{BEconj}).
\end{proof}

\begin{proof}[Proof of Theorem \ref{strongsw}]
Let $G$ be a bounded component of $I(f,(a_n))^c$ and $G_n= \widetilde{f^n(G)}$ as before.
In addition to the fact proved in the proof of Theorem \ref{Isw}, we also need to show that $\partial G_m \subset I(f,(a_n))$, for $m \in \mathbb{N},$ and that $I(f,(a_n))$ has an unbounded component. As $(a_n)$ is increasing and $\partial G \subset I(f,(a_n))$, we have 
$$\partial G_m \subset f^m(\partial G) \subset I(f,(a_{n+m})) \subset I(f,(a_n)),\;\text{for}\;m \in \mathbb{N}.$$
Hence, the subsequence $(G_{n_k})$ found in the proof of Theorem \ref{Isw} satisfies $\partial G_{n_k} \subset I(f,(a_n))$ in addition to (\ref{Gnsub}).

Recall now that each component of 
$$A_R(f)= \{z: \lvert f^n(z) \rvert \geq M^n(R),\;\text{for}\; n \in \mathbb{N}\},$$ whenever $R>0$ is such that $M(r,f)> r$ for $r \geq R,$ is unbounded (see \cite[Theorem 1.1]{Fast}). Now assume also that $R> a_1$ and so $M(R,f) \geq a_1,$ and let $z \in A_R(f)$. Then $\lvert f^n(z) \rvert \geq M^n(R)$, for $n \in \mathbb{N}.$ As $a_{n+1} \leq M(a_n),$ for all $n \in \mathbb{N},$ we deduce that $M^n(R) \geq a_n,$ for all $n \in \mathbb{N}$, and so $z \in I(f,(a_n)).$ Therefore, $I(f,(a_n))$ has at least one unbounded component. This will meet all $\partial G_{n_k}$, for $k$ large enough, and hence $I(f,(a_n))$ contains a spider's web.
\end{proof}
\section{Preliminary lemmas}

For $m \in \mathbb{N}$ and $(a_n)_{n \in \mathbb{N}}= (\frac{n+m}{2})_{n \in \mathbb{N}},$ the definition of $I(f,(a_n))$ is
$$I(f,((n+m)/2))=\{z \in \mathbb{C}: \lvert f^n(z)\rvert \geq (n+m)/2, n \in \mathbb{N}\},\;\text{for}\;m \in \mathbb{N}.$$
In this section we prove some basic results concerning the function $f$, defined by $f(z)= z+1+e^{-z}$, and the sets $I(f,((n+m)/2))$, which we will use in the proof of Theorem \ref{bdcomp}.
We first show that each set $I(f,((n+m)/2)),$ $m \in \mathbb{N},$ contains a right half-plane as well as a family of horizontal half-lines and a family of horizontal lines.

\begin{lemma}
\label{Inm}
Let $f(z)= z+1+e^{-z}.$ For $ m \in \mathbb{N}$, $I(f,((n+m)/2))$ contains the following sets:
\begin{itemize}
\item[(1)] the right half-plane $\{ x+yi: x \geq m\}$;
\item[(2)] the half-lines of the form $\{x+2j\pi i: x \leq -m\},$ for each $ j \in \mathbb{Z}$ with $\lvert j \rvert <m$; and
\item[(3)] the lines of the form $\{x+ 2j\pi i: x \in \mathbb{R} \},$ for each $ j \in \mathbb{Z}$ with $\lvert j \rvert \geq m.$
\end{itemize}
\end{lemma}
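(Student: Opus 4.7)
The plan is to exploit two structural features of $f$. First, for any integer $j$, the horizontal line $L_j = \{x + 2j\pi i : x \in \mathbb{R}\}$ is $f$-invariant because $e^{-(x+2j\pi i)} = e^{-x}$, and the restriction of $f$ to $L_j$ is the real map $h(x) = x + 1 + e^{-x}$. Second, for a general $z = x + iy$, the identity $\Re(f(z)) = x + 1 + e^{-x}\cos y$ yields the lower bound $\Re(f(z)) \geq x + 1 - e^{-x}$. The two elementary facts about $h$ that I will use repeatedly are $h(x) \geq 2$ everywhere (minimum at $x = 0$) and $h(x) > x + 1$ everywhere; iterating the latter gives the universal estimate $h^n(x_0) \geq n + 1$ for every $n \geq 1$ and every $x_0 \in \mathbb{R}$.

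For (1), starting from $z_0$ with $\Re(z_0) \geq m$, I would show by induction using the off-line real-part bound that the half-plane $\{\Re(z) \geq m\}$ is forward invariant and that $\Re(f^n(z_0)) \geq m + n(1 - e^{-m})$. Since $m \geq 1$ forces $1 - e^{-m} > 1/2$, the estimate $|f^n(z_0)| \geq \Re(f^n(z_0))$ comfortably dominates $(n+m)/2$ for every $n \geq 0$.

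For (2), the orbit of $z_0 = x_0 + 2j\pi i$ with $x_0 \leq -m$ stays on $L_j$, so $f^n(z_0) = h^n(x_0) + 2j\pi i$. The condition $x_0 \leq -m$ gives an exponentially large first iterate $h(x_0) \geq e^m - m + 1$, and combining this with $h(x) > x+1$ I obtain $h^n(x_0) \geq e^m - m + n$ for $n \geq 1$. Verifying $|f^n(z_0)| \geq (n+m)/2$ then reduces to the elementary inequality $2e^m \geq 3m$, which holds for $m \geq 1$, while the case $n = 0$ is immediate from $|z_0| \geq |x_0| \geq m$. Note that the hypothesis $|j| < m$ plays no role in these bounds; its purpose is merely to mark the range of $j$ not already covered by (3).

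For (3), with $|j| \geq m$, the imaginary part alone gives $|f^n(z_0)| \geq 2\pi |j| \geq 2\pi m$ for every $n$, which secures the required inequality whenever $n \leq (4\pi - 1)m$, in particular for all $n \leq m - 2$. For the remaining values $n \geq m - 1$, I would switch to the real-part bound $|f^n(z_0)| \geq h^n(x_0) \geq n + 1 \geq (n+m)/2$. The main obstacle throughout is not conceptual but bookkeeping: one has to confirm that the assumption $m \geq 1$ is enough to make all the constants work out (notably $e^{-m} < 1/2$ in (1) and $2e^m \geq 3m$ in (2)) and that the two regimes in (3) overlap — each of these is a short calculus check.
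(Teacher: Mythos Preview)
Your proposal is correct and follows essentially the same approach as the paper: use the $f$-invariance of the lines $L_j$ together with the one-dimensional dynamics of $h(x)=x+1+e^{-x}$, and for general $z$ bound $\Re(f(z))$ from below by $x+1-e^{-x}$ to handle the right half-plane. The only cosmetic differences are that the paper reduces case~(2) to case~(1) (since $h(x_0)>m+1$ lands the orbit in the half-plane already treated) rather than tracking $h^n(x_0)$ directly, and in case~(3) the paper splits at $n=m$ rather than at $n=m-1$; your direct estimates $h^n(x_0)\geq n+1$ and $2e^m\geq 3m$ are perfectly adequate substitutes.
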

\begin{proof}

Case (1): If $z= x+ yi$ and $\text{Re}(z)= x \geq m$, then $$\text{Re}(f(z))= x+1 + e^{-x} \cos y \geq x+ \frac{1}{2},$$
since $e^{-x} <1/2,$ for $x \geq 1,$ Thus,  $\text{Re}(f^n(z))\geq m+ n/2 > (n+m)/2,$ for $n \in \mathbb{N}$, and so $z \in I(f, ((n+m)/2)).$ \\

Case (2): Let $z= x+yi$ with $x \leq -m$ and  $y= 2k\pi i,$ where $k \in \mathbb{Z}$ and $\lvert k \rvert <m.$ Then
$$\text{Re}(f(z))= x+1+e^{-x} \geq -m+1 +e^{m}>m+1,$$ 
and so, by the argument in case (1), $z \in I(f,((n+m)/2)).$\\

Case (3):
Let  $z_0= 2m \pi i.$ Then, for $1 \leq n\leq m$, we have
$$\lvert f^n(z_0) \rvert > \text{Im}(f^n(z_0)) = 2m \pi\geq (m+n) \pi > \frac{n+m}{2}.$$

Also, for $n>m$, we have $$\lvert f^n(z_0) \rvert >\text{Re}(f^{n}(z_0) ) >n> \frac{n+ m}{2}.$$ 

Hence $z_0 \in I(f,((n+m)/2)).$ For any other point of the form $z_1=x+ 2m\pi i$, $ x \neq 0$, we have $f(z_1)=x'+ 2m\pi i,$ where $x'>2$. Hence $\text{Im}(f^n(z_1))=\text{Im}(f^n(z_0))=~ 2m\pi$. Since $f(x)= x+1+e^{-x}$ is increasing for $x>0$ and $x'> \text{Re}(f(z_0)),$ we deduce that $\text{Re}(f^{n}(z_1))> \text{Re}(f^{n}(z_0)).$  Therefore, $z \in I(f,((n+~m)/~2)),$ for $z= x+ 2m\pi i, x \in \mathbb{R}.$

 Now take $z_2= x +2k\pi i,$ where $k \in \mathbb{Z}$ with $\lvert k \rvert \geq m$ and $x \in \mathbb{R}.$ Then $$\text{Im}(f^n(z_2)) \geq 2m\pi= \text{Im}(f^n(x+2m\pi i)),\;\text{ and}\; \text{Re}(f^n(z_2))= \text{Re}(f^n(x)),$$ so we deduce that $z \in I(f((n+m)/2)),$ for all $z=x+2k\pi i, k \in \mathbb{Z}$ with $\lvert k \rvert \geq m.$
 \end{proof}

Now fix $m \in \mathbb{N}.$ Next we define the rectangles $$R_k= \{x+yi :\lvert x \rvert<m+k, \lvert y \rvert <2(m+k)\pi\},\;\;k \geq 0.$$ Note that
\begin{equation}
\label{squares}
R_k \subset \{z: \lvert z \rvert < 3\pi (m+k)\},
\end{equation}
 since, for $z \in R_k,$  $$\lvert z \rvert \leq \sqrt{(m+k)^2+ 4\pi^2 (m+k)^2}< 3\pi (m+k).$$We have the following lemma concerning the rectangles $R_k$.
\begin{lemma}
\label{strips}
Let $f(z)=z+1+e^{-z}.$ If $z \in I(f,((n+m+k)/2))^c$, where $ m \in~ \mathbb{N}, k \geq ~0$ are fixed, and $f(z)$ lies outside $R_{k+1},$ then $f(z)$ lies in a horizontal half-strip of the form  $$S_{k+1,j} = \begin{cases}  \{x+yi: x\leq -m-k-1,\; 2j\pi \leq y \leq 2(j+1)\pi\}, &\mbox{if } -m-k-1 \leq j<m+k+1, \\ 
\{x+yi: x\leq m+k+1,\; 2j\pi \leq y \leq 2(j+1)\pi\}, & \mbox{otherwise}, \end{cases}$$ where $j \in \mathbb{Z}.$ 
\end{lemma}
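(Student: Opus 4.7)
The plan is to argue by contradiction: assume $f(z)\notin R_{k+1}$ and $f(z)\notin S_{k+1,j}$ for every $j\in\Z$, and deduce that $z\in I(f,((n+m+k)/2))$, contradicting the hypothesis $z\in I(f,((n+m+k)/2))^c$. The first step is a short geometric observation: the closed strips $\{S_{k+1,j}\}_{j\in\Z}$ together with the open rectangle $R_{k+1}$ cover the plane except for the right half-plane $\{w=u+vi:u>m+k+1\}$. I would check this by cases on the magnitude of $v=\text{Im}(w)$. If $|v|\ge 2(m+k+1)\pi$, then $v\in[2j\pi,2(j+1)\pi]$ for some $j$ with $j\ge m+k+1$ or $j\le -m-k-1$, and for any $u\le m+k+1$ the point $w$ lies in the corresponding ``otherwise'' strip. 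If $|v|<2(m+k+1)\pi$ and $u\le -m-k-1$, then $w$ lies in the corresponding ``if'' strip, while if $|v|<2(m+k+1)\pi$ and $|u|<m+k+1$ then $w\in R_{k+1}$. The only case not covered is $u\ge m+k+1$.

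Applied with $w=f(z)$, this forces $\text{Re}(f(z))\ge m+k+1$. I would then invoke Lemma \ref{Inm}(1) with $m$ replaced by $m+k+1\in\N$ to place $f(z)$ in $I(f,((n+m+k+1)/2))$, so that
$$|f^{n+1}(z)|\ge\frac{n+m+k+1}{2}\qquad\text{for all }n\in\N.$$
Reindexing, this says $|f^\ell(z)|\ge(\ell+m+k)/2$ for every $\ell\ge 2$. For $\ell=1$ the trivial bound $|f(z)|\ge\text{Re}(f(z))\ge m+k+1\ge(1+m+k)/2$ handles the missing case, so $|f^\ell(z)|\ge(\ell+m+k)/2$ holds for all $\ell\ge 1$; that is, $z\in I(f,((n+m+k)/2))$, the desired contradiction.

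I do not expect any serious technical obstacle; the argument is essentially bookkeeping. The one point requiring care is the geometric step, where one must verify that the boundaries of the rectangle and the strips fit together so that no point of the half-plane $\{\text{Re}(w)\le m+k+1\}$ is missed --- in particular, points with $\text{Re}(w)=m+k+1$ and $|\text{Im}(w)|\ge 2(m+k+1)\pi$ must be correctly placed in an ``otherwise'' strip. The real content of the lemma is the index shift: a point $f(z)$ whose orbit escapes at the rate $((n+m+k+1)/2)$ automatically forces the orbit of $z$ itself to escape at the rate $((n+m+k)/2)$, and this is what produces the contradiction with Lemma \ref{Inm}(1).
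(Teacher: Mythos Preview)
Your argument is correct and is essentially the contrapositive of the paper's proof: the paper first uses the index shift (together with $f(z)\notin R_{k+1}\Rightarrow |f(z)|\ge (m+k+1)/2$) to place $f(z)$ in $I(f,((n+m+k+1)/2))^c$, and then invokes Lemma~\ref{Inm} (with $m$ replaced by $m+k+1$) plus $f(z)\notin R_{k+1}$ to land $f(z)$ in some $S_{k+1,j}$, whereas you carry out the covering observation first and then only need part~(1) of Lemma~\ref{Inm} and the index shift to reach a contradiction. One small wording point: the region left uncovered by the $S_{k+1,j}$ together with $R_{k+1}$ is contained in $\{u\ge m+k+1\}$ rather than $\{u>m+k+1\}$ (e.g.\ $w=m+k+1$ lies in neither $R_{k+1}$ nor any strip), but you already use the weak inequality $\text{Re}(f(z))\ge m+k+1$ in what follows, so this does not affect the proof.
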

\begin{proof}
Note first that for $m \in \mathbb{N}$ and $k \geq 0,$ if $f(z) \in I(f,((n+m+k+1)/2))$ and $\lvert f(z) \rvert \geq (m+k+1)/2$, then $z \in I(f,((n+m+k)/2)).$ Hence,  $$\text{if}\;\; z \in I(f,((n+m+k)/2))^c\;\;\text{ and}\;\; \lvert f(z) \rvert \geq (m+k+1)/2,$$ $$\text{then}\;\; f(z) \in I(f,((n+m+k+1)/2))^c.$$ Now assume that $z \in I(f,((n+m+k)/2))^c$ and $f(z)$ lies outside $R_{k+1}$. Then $\lvert f(z) \rvert \geq (m+k+1)/2$ and hence  $f(z) \in I(f,((n+m+k+1)/2))^c.$ Therefore, we  deduce by Lemma \ref{Inm}, with $m$ replaced by $m+k+1$, that $f(z)$ lies in some half-strip of the form $S_{k+1,j}, j \in \mathbb{Z}.$
\end{proof}

The last result of this section gives some basic estimates for the size of $\lvert f \rvert$ in certain sets.
\begin{lemma}
\label{modf}
Let $f(z)=z+1+e^{-z}.$ If $-\textnormal{Re}(z) \geq \lvert z \rvert /2$ and $-\textnormal{Re}(z) \geq 3,$ then  
\begin{equation}
\label{doubleineq}
\frac{1}{2} e^{-\textnormal{Re}(z)} \leq \lvert f(z) \rvert \leq  2e^{-\textnormal{Re}(z)}.
\end{equation}
\end{lemma}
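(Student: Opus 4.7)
The plan is to write $x = \operatorname{Re}(z)$ (so that $|e^{-z}| = e^{-x}$) and apply the triangle inequality to $f(z) = (z+1) + e^{-z}$:
$$|e^{-z}| - |z+1| \;\leq\; |f(z)| \;\leq\; |e^{-z}| + |z+1|.$$
Since $|e^{-z}| = e^{-x}$, the whole lemma reduces to showing that the linear error term $|z+1|$ is small compared to the exponential term $e^{-x}$; specifically, it suffices to prove
$$|z+1| \;\leq\; \tfrac{1}{2}\, e^{-x}$$
under the two hypotheses $-x \geq |z|/2$ and $-x \geq 3$. Once this is in hand, the upper bound gives $|f(z)| \leq e^{-x} + \tfrac{1}{2}e^{-x} = \tfrac{3}{2}e^{-x} \leq 2e^{-x}$, and the lower bound gives $|f(z)| \geq e^{-x} - \tfrac{1}{2}e^{-x} = \tfrac{1}{2}e^{-x}$, exactly as required.

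To establish $|z+1|\leq \tfrac{1}{2}e^{-x}$, I would first use the hypothesis $-x\geq |z|/2$ to get $|z|\leq -2x$, and then
$$|z+1| \;\leq\; |z|+1 \;\leq\; -2x+1.$$
So it remains to verify the one-variable inequality $-2x+1 \leq \tfrac{1}{2}e^{-x}$ for all $x\leq -3$. I would set $g(x) = \tfrac{1}{2}e^{-x} - (-2x+1) = \tfrac{1}{2}e^{-x} + 2x - 1$, compute $g(-3) = \tfrac{1}{2}e^{3} - 7 > 0$, and note that $g'(x) = -\tfrac{1}{2}e^{-x}+2 < 0$ for all $x\leq -3$, so $g$ is decreasing in $x$. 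Hence as $x$ decreases from $-3$ to $-\infty$, $g(x)$ increases, and in particular $g(x)\geq g(-3)>0$ throughout $x\leq -3$.

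There is no real obstacle here — the argument is essentially a two-term triangle inequality plus a routine calculus check that the exponential dominates the linear term. The only subtlety is bookkeeping: one must use the hypothesis $-x\geq |z|/2$ to control $|z|$ (the point being that $z$ is allowed to be large in modulus, but only proportionally to how far it lies to the left of the imaginary axis, so that the growth of $e^{-z}$ still swamps $z$). The constants $\tfrac{1}{2}$ and $2$ in the conclusion come out exactly because the threshold $-x\geq 3$ is just large enough to make $|z+1|$ at most half of $e^{-x}$.
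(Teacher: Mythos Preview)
Your proof is correct and follows essentially the same approach as the paper: triangle inequality on $f(z)=(z+1)+e^{-z}$, then use $|z|\leq -2\operatorname{Re}(z)$ and the elementary bound $e^{t}/2\geq 1+2t$ for $t\geq 3$. The only cosmetic difference is that you package both directions into the single estimate $|z+1|\leq\tfrac12 e^{-x}$ (yielding the slightly sharper upper bound $\tfrac32 e^{-x}$), whereas the paper treats the upper and lower bounds separately with two closely related inequalities.
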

\begin{proof}
In order to prove the first inequality of (\ref{doubleineq}) note that
\begin{eqnarray}
\lvert f(z) \rvert &\geq& \lvert e^{-z} \rvert -1- \lvert z \rvert \nonumber \\
&=& e^{-\text{Re}(z)} -1- \lvert z\rvert \nonumber \\
&\geq& e^{-\text{Re}(z)} -1+ 2\text{Re}(z) \nonumber \\
&\geq& \frac{1}{2} e^{-\text{Re}(z)}, \nonumber
\end{eqnarray}
since $e^t/2 \geq 1+2t$, for $t \geq 3.$

For the second inequality of (\ref{doubleineq}) we have
\begin{eqnarray}
\lvert f(z) \rvert &\leq& \lvert e^{-z} \rvert +1+ \lvert z \rvert \nonumber \\
&=& e^{-\text{Re}(z)} +1+ \lvert z \rvert \nonumber \\
&\leq & e^{-\text{Re}(z)} +1-2\text{Re}(z) \nonumber \\
& \leq & 2e^{-\text{Re}(z)} , \nonumber
\end{eqnarray}
since $e^t \geq 1+2t,$ for $t \geq 2.$
\end{proof}

\section{Proof of Theorem \ref{bdcomp}}

Again let $m \in \mathbb{N}$ be fixed.
The idea of the proof of Theorem \ref{bdcomp} is to show first that all the components of $I(f,((n+m)/2))^c$ are bounded and then use Theorem \ref{strongsw}. We are going to assume that there exists an unbounded component of $I(f,((n+~m)/2))^c$ and obtain a contradiction. In order to obtain the contradiction we will make use of the following lemma (see \cite[Lemma 1]{Slow}).
\begin{lemma}
\label{R+S}
Let $E_n, n\geq 0,$ be a sequence of compact sets in $\mathbb{C}$ and $f : \mathbb{C} \to \mathbb{\hat{C}}$
be a continuous function such that 
 $$f(E_n) \supset E_{n+1},\;\;\text{ for}\;\; n\geq 0.$$
Then there exists $\zeta \in E_0$ such that

$$f^n(\zeta)\in E_n,\;\;\text{for}\;\; n\geq 0.$$
\end{lemma}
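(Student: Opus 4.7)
The plan is to run a standard nested compact sets argument. For each $n \geq 0$, I would define
$$F_n = \{z \in E_0 : f^k(z) \in E_k \text{ for all } 0 \leq k \leq n\} = E_0 \cap \bigcap_{k=1}^{n} f^{-k}(E_k).$$
Since $f$ is continuous and each $E_k$ is closed in $\mathbb{C}$, each preimage $f^{-k}(E_k)$ is closed in $\mathbb{C}$, so $F_n$ is a closed subset of the compact set $E_0$ and hence itself compact. Clearly $F_{n+1} \subset F_n$, so this gives a decreasing sequence of compact sets.

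The second step is to verify that every $F_n$ is nonempty by a finite backward pullback. Pick any point $w_n \in E_n$. Using the hypothesis $f(E_{n-1}) \supset E_n$, choose a preimage $w_{n-1} \in E_{n-1}$ with $f(w_{n-1}) = w_n$. Iterating this choice $n$ times produces points $w_0 \in E_0, w_1 \in E_1, \ldots, w_n \in E_n$ with $f(w_k) = w_{k+1}$ for $0 \leq k \leq n-1$. Then $f^k(w_0) = w_k \in E_k$ for every $0 \leq k \leq n$, so $w_0 \in F_n$ and hence $F_n \neq \emptyset$.

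By the finite intersection property applied to the nested family $(F_n)_{n \geq 0}$ of nonempty compact sets, the intersection $F_\infty = \bigcap_{n \geq 0} F_n$ is nonempty. Any $\zeta \in F_\infty$ satisfies $f^n(\zeta) \in E_n$ for every $n \geq 0$, which is precisely the conclusion.

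There is no serious obstacle; the argument is a standard diagonal-type compactness statement. The only minor point worth flagging is that $f$ is allowed to take values in $\hat{\mathbb{C}}$ rather than $\mathbb{C}$, but since each target set $E_n$ lives in $\mathbb{C}$, the preimages $f^{-k}(E_k)$ are closed subsets of $\mathbb{C}$, and the compactness argument inside $E_0$ goes through without having to worry about the point at infinity.
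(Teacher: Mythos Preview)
Your argument is correct and is the standard proof of this well-known lemma. The paper itself does not supply a proof; it simply cites \cite[Lemma~1]{Slow}, so there is nothing to compare against beyond noting that your nested-compact-sets argument is exactly the usual route.
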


\begin{proof}[Proof of Theorem \ref{bdcomp}]
Suppose that there exists an unbounded component $U$ of the complement of $I(f,((n+m)/2))$. We will construct a certain bounded curve $\gamma_0 \subset U$ and by considering the images $f^k(\gamma_0)$, $k \geq 0$, we will deduce, by Lemma~ \ref{R+S}, that $\gamma_0$ contains a point of $I(f,((n+m)/2))$, giving a contradiction.

Take $$S_{0,j} = \begin{cases}  \{x+yi: x\leq -m, 2j\pi \leq y \leq 2(j+1)\pi\}, &\mbox{if } -m \leq j<m, \\ 
\{x+yi: x\leq m, 2j\pi \leq y \leq 2(j+i)\pi\}, & \mbox{otherwise}, \end{cases}$$ where $j \in \mathbb{Z}.$  As $I(f,((n+m)/2))$ is closed, $U$ is open and hence, by Lemma \ref{Inm}, there exists $j_0 \in \mathbb{Z}$ such that $U$ contains $z_{0,1}, z_{0,2} \in S_{0,j_0}$ and a curve $\gamma_0 \subset U$ joining $z_{0,1}$ to $ z_{0,2}$ such that:
\begin{itemize}
\item[(a)]$\text{Re}(z_{0,1})=a_0 < -3(m+1)$;
\item[(b)] $\text{Re}(z_{0,2})= a_0-10;$
\item[(c)]$\gamma_0 \subset \{z: a_0-10 \leq \text{Re}(z) \leq a_0\} \cap S_{0,j_0};$ 
\item[(d)]$-\text{Re}(z) \geq \lvert z\rvert /2,$ for $z \in \gamma_0.$
\end{itemize}

  \begin{figure}
  \centering
  \def\svgwidth{\linewidth}
 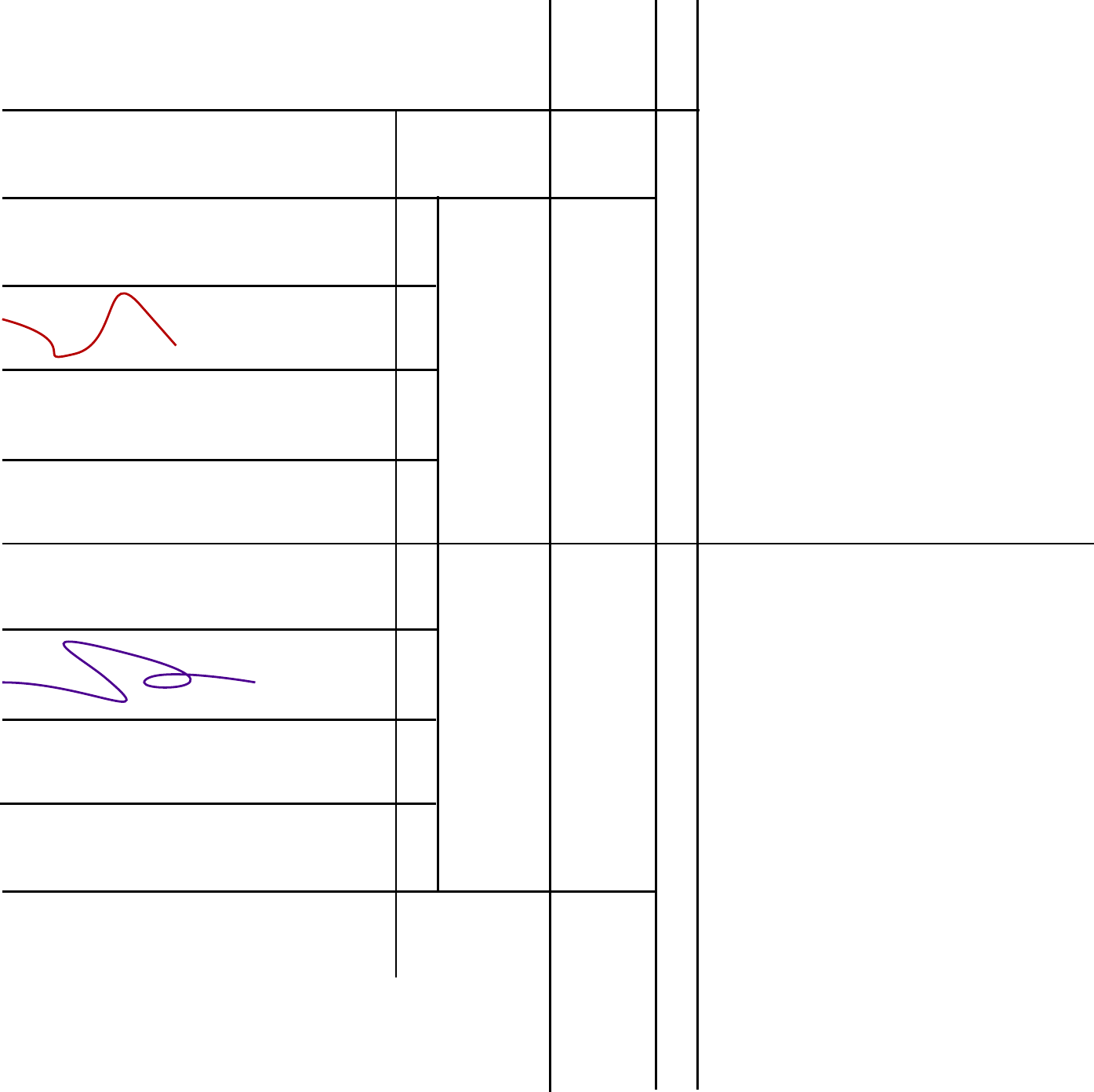
\caption{The construction of the curves $\gamma_k$ }
 \end{figure}

In general, suppose that after $k$ steps, $k \geq 0,$ we have a curve $\gamma_k \subset f(\gamma_{k-1})$ which lies in some $S_{k,j_k}$ and joins the points $z_{k,1},z_{k,2}$, such that:
\begin{itemize}
\item[(a)]$\text{Re}(z_{k,1})=a_{k}< -3(m+k+1)$;
\item[(b)] $\text{Re}(z_{k,2})= a_k-10;$
\item[(c)]$\gamma_k \subset \{z: a_k-10 \leq \text{Re}(z) \leq a_k\} \cap S_{k,j_k};$ 
\item[(d)]$-\text{Re}(z) \geq \lvert z\rvert /2$, for $z \in \gamma_k.$\\ 
 \end{itemize}
 Then, by (a), (c) and (d), and the left-hand inequality of (\ref{doubleineq}), we deduce that, for $z \in \gamma_k,$ 
\begin{equation}
\label{discineq} 
 \lvert f(z) \rvert \geq e^{-a_k}/2 > 3(m+k+1)\pi.
\end{equation} 
Hence, by (\ref{squares}), the curve $f(\gamma_{k})$ lies outside $R_{m+k+1}$ and so, by Lemma \ref{strips}, it must lie in some half-strip $S_{k+1,j_{k+1}}.$ Now, by both inequalities of (\ref{doubleineq}), we have
  
\begin{equation}
\label{4.2}
\lvert f(z_{k,2})\rvert \geq \frac{e^{-\text{Re}(z_{k,2})}}{2} = \frac{e^{10}}{2} e^{-a_k}\geq \frac{e^{10}}{4} \lvert f(z_{k,1})\rvert.
\end{equation}
 
Since  $f(\gamma_k) \subset S_{k+1,j_{k+1}} \subset \{z: \text{Re}(z) \leq m+k+1\},$ it follows from (\ref{discineq}) and (\ref{4.2}) by a routine calculation  that the set 
$$\{w \in f(\gamma_k): \lvert w \rvert \geq \frac{e^{10}}{8} \lvert f(z_{k,1})\rvert \}$$
contains points $z_{k+1,1}, z_{k+1,2}$ and a curve $\gamma_{k+1} \subset f(\gamma_k)$ (see Figure 3) joining $z_{k+1,1}$ to $z_{k+1,2}$ such that  
\begin{itemize}
\item[(a)] $\text{Re}(z_{k+1,1})= a_{k+1} <-3(m+k+2);$
\item[(b)] $\text{Re}(z_{k+1,2})= a_{k+1} -10;$
\item[(c)] $\gamma_{k+1} \subset \{z: a_{k+1}-10\leq\text{Re}(z) \leq a_{k+1}\} \cap S_{k+1,j_{k+1}};$
\item[(d)] $-\text{Re}(z)>\lvert z \rvert /2$,  for $z \in f(\gamma_{k}).$
\end{itemize}
 
As $\gamma_{k+1} \subset f(\gamma_{k}),$ for $k \geq 0$, it follows from  Lemma \ref{R+S} that there exists $\zeta \in \gamma_0$ such that $f^k(\zeta) \in \gamma_k$, for all $k \geq 0.$ Hence, by (\ref{discineq}), $$\lvert f^k(\zeta)\rvert > 3(k+m)\pi> \frac{k+m}{2},\;\text{for}\;k \geq 0,$$
 and so $\zeta \in I(f, ((n+m)/2))$, which gives us a contradiction. So all the components of $I(f,((n+m)/2))^c$ are bounded.
 
 Now take $m \geq 6.$ Since the disc $D(0, 7/2)$ contains two fixed points of $f$, at $\pm \pi i$, each disc $D(0, (n+m)/2),$ $n \in \mathbb{N},$ $m\geq 6$ contains two fixed points. Therefore, by Theorem \ref{strongsw}, we deduce that $I(f, ((n+m)/2))$ contains a spider's web, for all $m \geq 6$. Since $I(f, ((n+m')/2)) \supset I(f, ((n+m)/2))$ whenever $m'<m$, it follows that $I(f, ((n+m)/2))$ contains a spider's web for all $m \in \mathbb{N}.$\end{proof}

\begin{remark}
In the proof of Theorem \ref{bdcomp} we can actually obtain a contradiction by supposing that $I(f,((n+m)/2))^c$ has a component of sufficiently large diameter. In fact it follows from the proof that all the components of $I(f,((n+m)/2))^c$ that lie outside the rectangle $R_0$ have diameter less than $12$.
\end{remark}

\section{Endpoints}
Many people have studied the endpoints of the Julia sets of functions in the exponential family $f_a(z)= e^z +a$, especially in the case when $a \in (-\infty, -1)$, including the celebrated result by Karpi\'{n}ska that the set of endpoints in this case has Hausdorff dimension 2 (see \cite{Karpinska}). Mayer showed that, for $a$ as above, the set of endpoints, $E(f_a),$ is a totally disconnected set whereas $E(f_a) \cup \{\infty\}$ is connected (see \cite[Theorem 3]{Mayer}). Following this result, Alhabib and Rempe-Gillen proved recently that the same is true if instead of $E(f_a)$ we consider the set of the \textit{escaping} endpoints $\tilde{E}(f_a)$ whenever $a$ and $f_a$ satisfy certain conditions (see \cite[Theorem ~1.3]{ARexplode}).

Since $J(f)$ is a Cantor bouquet for Fatou's function we can consider the set of endpoints of $J(f)$ which we denote by $E(f).$ We show that Mayer's result also holds for Fatou's function.

\begin{theorem}
\label{May}
Let $f(z)= z+1+e^{-z}.$ Then $E(f)$ is totally disconnected but $E(f) \cup \{\infty\}$ is connected.
\end{theorem}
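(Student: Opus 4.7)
The plan is to follow the strategy of Mayer's theorem for the exponential family \cite{Mayer}, using the Cantor bouquet structure of $J(f)$ as the primary tool and supplementing the total-disconnectedness argument with Theorem \ref{Main}.

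For the total disconnectedness of $E(f)$, I would show that for any two distinct endpoints $e_1, e_2 \in E(f)$ there is a Jordan curve $\sigma \subset \mathbb{C} \setminus E(f)$ separating them, which immediately gives a clopen splitting of $E(f)$ with $e_1$ and $e_2$ in different pieces. Since the complement of $J(f)$ is the Baker domain $U$ and $E(f) \subset J(f)$, small portions of $U$ between neighbouring hairs of the Cantor bouquet avoid $E(f)$ altogether. Combining such local pieces of $U$ with the large-scale loops $\partial G_n \subset I(f)$ produced by Theorem \ref{Main}, and using the Baker domain as a detour whenever a spider's web loop might pass through an escaping endpoint lying in $E(f) \cap I(f)$, yields the required separating curve $\sigma$.

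For the connectedness of $E(f) \cup \{\infty\}$, I would adapt Mayer's topological argument. Suppose for a contradiction that there is a separation $E(f) \cup \{\infty\} = A \sqcup B$ with $\infty \in A$ and $B \neq \emptyset$, so that $B$ is a nonempty compact subset of $E(f) \subset \mathbb{C}$. Since $J(f)$ is a Cantor bouquet, the endpoints of nearby hairs (in the symbolic parameterisation of the bouquet) are themselves nearby in $\mathbb{C}$; picking a hair $h$ with endpoint in $B$ and using the density of hairs whose endpoints lie in $A$, one extracts a sequence of endpoints in $A$ converging to a point of $\overline{B} = B$, contradicting the separation.

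The main obstacle is the connectedness argument: it rests on the precise Cantor bouquet structure of $J(f)$ --- essentially an ambient homeomorphism to a straight brush in the sense of Aarts-Oversteegen --- and on continuity of the endpoint map under the resulting parameterisation. This description is known in the literature for functions of Fatou's type, but it must be invoked carefully to make the limiting step rigorous, since the non-uniform geometry of the bouquet near $\infty$ is exactly what allows $E(f)$ to be totally disconnected while $E(f) \cup \{\infty\}$ remains connected.
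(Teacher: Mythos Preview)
Your approach differs substantially from the paper's, and both halves have real gaps.

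For total disconnectedness, the paper does not use Theorem~\ref{Main} at all. Instead it observes that $f$ is a lift of $g(z)=e^{-1}ze^{-z}$, a finite-order map whose singular values lie in a compact subset of its attracting basin; Bara\'{n}ski's theorem then gives that $E(g)$ is totally disconnected, and this lifts to $E(f)$. Your route via the spider's web is problematic: the loops $\partial G_n\subset I(f)$ supplied by Theorem~\ref{Main} need not avoid $E(f)$, since escaping endpoints lie in $I(f)\cap E(f)$. Your proposed ``detour through $U$'' to bypass such endpoints already presupposes enough control on the hair geometry (accessibility of endpoints from $U$, separation of adjacent hairs by arcs in $U$) that the spider's web becomes superfluous --- once you have that control, total disconnectedness follows directly from the bouquet structure. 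Moreover, the loops produced by Theorem~\ref{Main} surround large discs; they do not help you separate two endpoints that are close to one another, which is the essential case.

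For connectedness, the paper again works through $g$: Bara\'{n}ski's Theorem~C gives that the endpoints are exactly the accessible boundary points of the Fatou component, and Mayer's general result \cite[Theorem~2]{Mayer} on principal sets then yields that $E(f)\cup\{\infty\}$ is connected. Your sketch rests on ``continuity of the endpoint map'' so that symbolically nearby hairs have nearby endpoints in $\mathbb{C}$. In a straight brush this map is \emph{not} continuous --- indeed, its failure of continuity is exactly what permits $E(f)$ to be totally disconnected while $E(f)\cup\{\infty\}$ remains connected. Hence the claimed extraction of a sequence of endpoints in $A$ converging to a point of $B$ is unjustified; the actual mechanism behind Mayer's theorem passes through accessibility and prime ends, not through any continuity of the endpoint assignment.
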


\begin{proof}
First we show that $E(f)$ is totally disconnected. Bara\'{n}ski showed in \cite[p.~52]{Baranski} that $E(g)$ is totally disconnected whenever $g$ is a transcendental entire function of finite order such that all the singularities of $g^{-1}$ are contained in a compact subset of the immediate basin of an attracting fixed point. Note that this is the case for $g(z)= e^{-1}ze^{-z}$. Indeed, $g$ is of order $1$ and $0$ is an attracting fixed point for $g.$ Also it is not hard to see that the only two singularities of $g^{-1}$, namely $0$ and $e^{-2}$, lie in the attracting basin of $0$, which is the only Fatou component. Hence, $E(g)$ is totally disconnected and, since $f$ is the lift of $g$, we deduce that $E(f)$ is totally disconnected.

In the same paper Bara\'{n}ski proved that for the same class of functions the endpoints of the hairs of the Julia set are the only accessible points from the basin of attraction (see \cite[Theorem C]{Baranski}). In particular, this result holds for $g(z)= e^{-1}ze^{-z}$ and so for its lift, $f(z)=z+1+e^{-z}$. Therefore, \cite[Theorem 2]{Mayer} implies that for the domain $U= \mathbb{C} \setminus J(f)$ the so-called `principal set' of $U$, $P(U)= E(f) \cup \{\infty\}$, is connected. 
\end{proof}

We finish this section with a consequence of Theorem \ref{Main} which seems to be the first topological result about  \textit{non}-escaping endpoints of a Cantor bouquet Julia set of a transcendental entire function. In particular, we show that, for Fatou's function, the set of non-escaping endpoints, $\hat{E}(f)= E(f) \setminus \tilde{E}(f)= E(f)\setminus I(f)$ together with infinity is a totally disconnected set. Bara\'{n}ski, Fagella, Jarque and Karpi\'{n}ska have recently showed that $\hat{E}(f)$ has full harmonic measure with respect to the Baker domain (\cite[Example 1.6]{BFJK}). Note that the set $\hat{E}(f)$ is the radial Julia set of $f$, $J_r(f),$ as defined in \cite{Lasse}. This set was introduced by Urba\'{n}ski \cite{Urb} and McMullen \cite{McM} and there are many known results regarding its dimension (see also \cite{UZ}).

\begin{theorem}
Let $f(z)= z+1+e^{-z}.$ Then $\hat{E}(f) \cup \{\infty\}$ is totally disconnected.
\end{theorem}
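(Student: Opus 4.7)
The plan is to show that every connected component of $\hat E(f)\cup\{\infty\}$ (in the subspace topology inherited from $\hat{\mathbb C}$) is a singleton, by combining Theorem \ref{May} with the spider's-web structure given by Theorem \ref{Main}. The splitting is along whether the component contains $\infty$.

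First, suppose $C\subset \hat E(f)\cup\{\infty\}$ is a connected subset not containing $\infty$. Then $C$ is a connected subset of $\hat E(f)\subset E(f)$, and since Theorem \ref{May} already tells us that $E(f)$ is totally disconnected, $C$ must reduce to a single point. This case is essentially free once we observe $\hat E(f)\subset E(f)$.

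Next, suppose $C$ contains $\infty$ together with some other point $a\in\hat E(f)$. This is where Theorem \ref{Main} does the work: since $I(f)$ is a spider's web, there is a sequence of bounded simply connected domains $G_n$ with $\partial G_n\subset I(f)$, $G_n\subset G_{n+1}$, and $\bigcup_n G_n=\mathbb C$. Pick $N$ so large that $a\in G_N$. Because $G_N$ is simply connected and bounded, the sets $G_N$ and $\hat{\mathbb C}\setminus\overline{G_N}$ are disjoint, open, and cover $\hat{\mathbb C}\setminus \partial G_N$, with $a$ in the first and $\infty$ in the second. The loop $\partial G_N\subset I(f)$ is disjoint from $\hat E(f)\cup\{\infty\}$ (since $\hat E(f)=E(f)\setminus I(f)$), so intersecting with $\hat E(f)\cup\{\infty\}$ produces a genuine separation of $a$ from $\infty$ inside $\hat E(f)\cup\{\infty\}$. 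Hence no connected subset can contain both, and the component of $\infty$ is $\{\infty\}$.

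Combining the two cases shows that every connected component of $\hat E(f)\cup\{\infty\}$ is a single point, which is total disconnectedness. I do not foresee any serious obstacle here; the only point that deserves care is verifying that the complement of a bounded simply connected domain in $\hat{\mathbb C}$ is itself connected (so that $\partial G_N$ really does separate $\hat{\mathbb C}$ into two pieces), and that one must invoke Theorem \ref{May} to handle the components not containing $\infty$, since the spider's-web loops cannot on their own separate two finite endpoints that both lie deep inside some $G_N$.
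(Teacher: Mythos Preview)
Your proof is correct and follows essentially the same approach as the paper: use the spider's-web loops from Theorem~\ref{Main} to separate any non-escaping endpoint from $\infty$, and then invoke Theorem~\ref{May} (that $E(f)$ is totally disconnected) to rule out nontrivial components lying entirely in $\hat E(f)$. Your version simply spells out in more detail the separation argument that the paper compresses into a single sentence.
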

\begin{proof}

Suppose that there is a non-trivial component of $\hat{E}(f) \cup \{\infty\}$. Since $I(f)$ is a spider's web, any non-escaping endpoint is separated from infinity by a loop in $I(f)$, so this component must lie in $\hat{E}(f) \subset E(f)$. Hence we obtain a contradiction, since, by Theorem \ref{May}, $E(f)$ is totally disconnected.
\end{proof}
Discussions with Lasse Rempe-Gillen suggest that techniques introduced in the paper can be adapted to show that the same result holds for the exponential family. We will investigate this further in future work.

\section{Bergweiler's web}
By adapting the method used in Section 4, we can show that $I(f)$ is a spider's web for other transcendental entire functions, in particular some that have similar properties to the Fatou function. A well known function in this category is the function $f(z)= 2z +2 -\log 2 - e^z$ that was first studied by Bergweiler in \cite{InvD}.

In order to produce a similar argument to that used to prove Theorem \ref{bdcomp}, we introduce the set
$$I(f, (\sqrt{2}^{\,n}))= \{z \in \mathbb{C}: \lvert f^n(z) \rvert \geq \sqrt{2}^{\,n},\;\text{for}\; n \in \mathbb{N}\}$$ and we outline the proof of the following result.
\begin{theorem}
\label{BW}
Let $f(z)= 2z+2-\log 2-e^z.$ Then $I(f, (\sqrt{2}^{\,n}))$ contains a spider's web, and hence $I(f)$ is a spider's web.
\end{theorem}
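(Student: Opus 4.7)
The plan is to mirror the proof of Theorem \ref{bdcomp}, with the r\^{o}les of the left and right half-planes interchanged. Since the Baker domain of $f(z)=2z+2-\log 2-e^z$ contains a \emph{left} half-plane, the analogue of Lemma \ref{Inm} will place a left half-plane $\{\text{Re}(z)\le -K\}$ and suitable horizontal half-lines and lines inside $I(f,(\sqrt{2}^{\,n}))$, while the bounded curves $\gamma_k$ used to derive a contradiction will live in \emph{right} half-strips (where $-e^z$ is the dominant term), rather than left ones.

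The first step is to establish the analogues of Lemmas \ref{Inm}, \ref{strips} and \ref{modf}. The identity
\[\text{Re}(f(z)) = 2\,\text{Re}(z) + 2 - \log 2 - e^{\text{Re}(z)}\cos(\text{Im}(z))\]
shows that for $\text{Re}(z)\le -K$ with $K$ large, $\lvert f(z)\rvert$ roughly doubles at each iterate, so every such point lies in $I(f,(\sqrt{2}^{\,n}))$; horizontal lines $\{x+2j\pi i\}$ work because on them $\sin(\text{Im}(z))=0$, so $\text{Im}(f(z))=2\,\text{Im}(z)$ and the imaginary part itself doubles at each iterate, easily beating $\sqrt{2}^{\,n}$. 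The modulus estimate replacing Lemma \ref{modf} becomes: if $\text{Re}(z)\ge\lvert z\rvert/2$ and $\text{Re}(z)\ge C$ for a suitable constant $C$, then
\[\tfrac{1}{2}e^{\text{Re}(z)}\le \lvert f(z)\rvert \le 2e^{\text{Re}(z)},\]
using $e^t \ge 2(2t+2-\log 2)$ for $t \ge C$.

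Assuming for contradiction that $I(f,(\sqrt{2}^{\,n}))^c$ has an unbounded component $U$, I would construct a bounded curve $\gamma_0\subset U$ in some right half-strip $S_{0,j_0}$, with endpoints $z_{0,1},z_{0,2}$ whose real parts $a_0$ and $a_0+10$ satisfy $a_0 > 3(m+1)$ for a fixed $m$, and with $\text{Re}(z)\ge\lvert z\rvert/2$ on $\gamma_0$. The modulus estimate yields $\lvert f(z)\rvert\ge \tfrac{1}{2}e^{a_k}$ on $\gamma_k$, which ejects $f(\gamma_k)$ from $R_{k+1}$; the analogue of Lemma \ref{strips} then places $f(\gamma_k)$ in some right half-strip $S_{k+1,j_{k+1}}$, from which a routine calculation extracts $\gamma_{k+1}\subset f(\gamma_k)$ with the same geometric properties and with $a_{k+1}\ge a_k+\log(e^{10}/8)$. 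Applying Lemma \ref{R+S} to the sequence $(\gamma_k)$ produces $\zeta\in\gamma_0$ with $\lvert f^k(\zeta)\rvert\ge 3(m+k+1)\pi\ge \sqrt{2}^{\,k}$ for every $k$, contradicting $\zeta\in U\subset I(f,(\sqrt{2}^{\,n}))^c$. Hence every component of $I(f,(\sqrt{2}^{\,n}))^c$ is bounded.

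To conclude via Theorem \ref{strongsw} I verify that $(\sqrt{2}^{\,n})$ is increasing, that $\sqrt{2}^{\,n+1}\le M(\sqrt{2}^{\,n})$ holds trivially since $M(r,f)\sim e^r$ for large $r$, and that the superattracting fixed point $\log 2$ of $f$ lies in $D(0,\sqrt{2}^{\,n})$ for every $n\ge 1$. Theorem \ref{strongsw} then gives that $I(f,(\sqrt{2}^{\,n}))$ contains a spider's web, so $I(f)$ is a spider's web by \cite[Lemma 4.5]{BEconj}. The main obstacle I expect is the geometric bookkeeping for the right half-strips: the rectangles $R_k$, the threshold $C$, and the condition $\text{Re}(z)\ge\lvert z\rvert/2$ have to be tuned simultaneously so that the single inequality $\lvert f(z)\rvert\ge \tfrac{1}{2}e^{a_k}$ accomplishes all three jobs at once — ejecting $f(\gamma_k)$ from $R_{k+1}$, landing it in a right half-strip, and leaving room for a successor curve whose real parts are still at least $3(m+k+2)$, so that the $\sqrt{2}^{\,k}$ lower bound on $\lvert f^k(\zeta)\rvert$ can be read off uniformly for all $k$.
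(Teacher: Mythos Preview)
Your overall architecture is the same as the paper's: swap left/right, establish an analogue of Lemma~\ref{Inm} placing a left half-plane and suitable horizontal (half-)lines into $I(f,(\sqrt{2}^{\,n+m}))$, build nested curves $\gamma_k$ in right half-strips using the two-sided bound $\tfrac12 e^{\text{Re}(z)}\le |f(z)|\le 2e^{\text{Re}(z)}$, invoke Lemma~\ref{R+S}, and close with Theorem~\ref{strongsw} and the fixed point at $\log 2$. That is exactly the paper's route.

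There is, however, a genuine quantitative gap. You have transplanted the \emph{linear} scales from the Fatou argument (rectangle side $\sim m+k$, threshold $a_k>3(m+k+1)$, increment $a_{k+1}\ge a_k+\log(e^{10}/8)$) into a setting where the target sequence $(\sqrt{2}^{\,n})$ is \emph{exponential}. The contradiction you write down,
\[
\lvert f^k(\zeta)\rvert \;\ge\; 3(m+k+1)\pi \;\ge\; \sqrt{2}^{\,k}\quad\text{for every }k,
\]
is false once $k$ is large: the middle term grows linearly while the right-hand side grows like $2^{k/2}$. So the argument, as stated, does not close.

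The fix---and this is what the paper does---is to let all the scales grow exponentially in $k$. Because $\text{Re}(f(z))\approx 2\,\text{Re}(z)$ in the left half-plane and $\text{Im}(f(z))=2\,\text{Im}(z)$ on the lines $y=2j\pi$, the analogue of Lemma~\ref{Inm} for the shifted set $I(f,(\sqrt{2}^{\,n+m}))$ naturally yields thresholds of order $2^{m}$ (left half-plane $\{x\le -2^{m+2}\}$, lines with $|j|\ge 2^m$, half-lines with $x\ge 2^{m+2}$). Correspondingly the rectangles become $R_k=\{|x|<2^{k+2},\,|y|<2^{k+1}\pi\}$, and the inductive condition on the curves is $a_k>2^{k+2}$ rather than $a_k>3(m+k+1)$. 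Then $f^k(\zeta)\in\gamma_k$ gives $\lvert f^k(\zeta)\rvert\ge a_k>2^{k+2}>2^{k/2}=\sqrt{2}^{\,k}$, and the contradiction goes through. Your own observation that ``the imaginary part doubles at each iterate, easily beating $\sqrt{2}^{\,n}$'' is the clue that the relevant scales here are powers of $2$, not linear in $k$.
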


We first state some lemmas similar to the ones given in Section 3 which we need in the proof of Theorem \ref{BW}. The proofs of the lemmas are similar to the ones in Section 3, with minor modifications, and we omit the details.

We consider the family of sets 
$$I(f, (\sqrt{2}^{\,n+m}))= \{z \in \mathbb{C}: \lvert f^n(z) \rvert \geq \sqrt{2}^{\,n+m},\;\text{for}\; n \in \mathbb{N}\},\;\;\text{for}\;\;m \geq 0.$$
\begin{lemma}
Let $f(z)= 2z+2-\log 2-e^z.$ For $m \geq 0$, $I(f,(\sqrt{2}^{\;n+m}))$ contains the following sets:
\begin{itemize}
\item[(1)] the left half-plane $\{ x+yi: x \leq -2^{m+2}\}$;
\item[(2)] the half-lines of the form $\{x+2j\pi i: x \geq 2^{m+2}\},$ for each $ j \in \mathbb{Z}$ with $\lvert j \rvert <2^m$; and
\item[(3)] the lines of the form $\{x+ 2j\pi i: x \in \mathbb{R}\},$ for each $ j \in \mathbb{Z}$ with $\lvert j \rvert \geq 2^m.$
\end{itemize}
\end{lemma}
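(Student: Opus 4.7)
The plan is to mimic the proof of Lemma \ref{Inm} for Fatou's function. The structural fact driving everything is that $f(z) = 2z + 2 - \log 2 - e^z$ behaves like the affine map $z \mapsto 2z + c$ whenever $\operatorname{Re} z$ is very negative (so $e^z$ is negligible), while the $-e^z$ term instantly throws points from far to the right into this left-hand regime. I would treat the three cases in the order (3), (1), (2): case (1) reduces to a doubling recurrence on real parts, case (3) uses the exact doubling of the imaginary part on horizontal lines through multiples of $2\pi i$, and case (2) feeds into case (1) after a single iterate.

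For case (3), if $\operatorname{Im} z = 2j\pi$ then $e^z = e^{\operatorname{Re} z}$ is real, so $\operatorname{Im} f(z) = 2\operatorname{Im} z = 2(2j)\pi$. The imaginary part therefore doubles at each step regardless of the real part, giving $|\operatorname{Im} f^n(z)| = 2^{n+1}|j|\pi \geq 2^{n+m+1}\pi$, which dominates $\sqrt{2}^{\,n+m}$.

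For case (1), set $x_n = \operatorname{Re} f^n(z)$ and observe that as long as $x_n \leq -4$ we have $|e^{x_n}\cos(\operatorname{Im} f^n(z))| \leq e^{-4}$, so
$$x_{n+1} \leq 2x_n + 2 - \log 2 + e^{-4} \leq 2x_n + 2.$$
The affine substitution $y_n = x_n + 2$ converts this into $y_{n+1} \leq 2 y_n$, and since $y_0 \leq -2^{m+2} + 2 < 0$ iteration gives $x_n \leq -2^{n+1}(2^{m+1}-1) - 2$. In particular $x_n$ is decreasing, so the hypothesis $x_n \leq -4$ is preserved throughout and the induction closes. Hence $|f^n(z)| \geq |x_n| \geq 2^{n+m+1} \geq \sqrt{2}^{\,n+m}$.

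For case (2), if $z = x + 2j\pi i$ with $x \geq 2^{m+2}$ then $e^z = e^x$ is a large positive real, so an elementary estimate using $e^{2^{m+2}} \geq 2^{m+4} + 2$ gives $\operatorname{Re} f(z) = 2x + 2 - \log 2 - e^x \leq -2^{m+3}$. Thus $f(z)$ satisfies the hypothesis of case (1) with $m$ replaced by $m+1$, yielding $|f^n(z)| = |f^{n-1}(f(z))| \geq \sqrt{2}^{\,(n-1)+(m+1)} = \sqrt{2}^{\,n+m}$ for $n \geq 2$; the case $n=1$ is immediate from $|\operatorname{Re} f(z)| \geq 2^{m+3}$. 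The one mildly delicate point is the additive drift $2 - \log 2$ in the recurrence of case (1), which prevents a literal \emph{real part doubles} claim; the shift $y_n = x_n + 2$ absorbs it cleanly and the remaining estimates are routine.
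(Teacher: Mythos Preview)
Your proof is correct. The paper omits the proof of this lemma entirely, saying only that it is ``similar to the ones in Section~3, with minor modifications''; your argument is precisely the natural adaptation of the proof of Lemma~\ref{Inm} to the affine part $z\mapsto 2z+c$, and all three cases go through as you describe. Your handling of case~(3) is in fact slightly cleaner than a literal transcription of the paper's case~(3) for Fatou's function would be: because the imaginary part on the lines $\{y=2j\pi\}$ doubles at every step, the bound $|\operatorname{Im} f^n(z)|=2^{n+1}|j|\pi\geq 2^{n+m+1}\pi$ already dominates $\sqrt{2}^{\,n+m}$ and there is no need to split into the ranges $n\le m$ and $n>m$ or to track the real part separately.
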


We fix $m=0$ and we define the rectangles $$R_k= \{ z= x+yi \in \mathbb{C}:\lvert x \rvert<2^{k+2}, \lvert y \rvert <(2^{k+1})\pi\},\;\;k \geq 0.$$ Note that
\begin{equation}
\label{squares2}
R_k \subset \{z: \lvert z \rvert < 2^{k+3}\},
\end{equation}
 since, for $z \in R_k$,  $$\lvert z \rvert \leq \sqrt{4^{k+2}+ \pi^2 4^{k+1}}< 2^{k+3}.$$We have the following lemma concerning the rectangles $R_k$.
\begin{lemma}
\label{strips2}
Let $f(z)= 2z+2-\log 2 - e^z.$ If $z \in I(f,(\sqrt{2}^{\,n+k+1}))^c$, where $ k \geq 0$ is fixed, and $f(z)$ lies outside $R_{k+1},$ then $f(z)$ lies in a horizontal half-strip of the form  $$S_{k+1,j} = \begin{cases}  \{x+yi: x\geq 2^{k+3}, 2j\pi \leq y \leq 2(j+1)\pi\}, &\mbox{if } -2^{k+1} \leq j<2^{k+1}, \\ 
\{x+yi: x\geq -2^{k+3}, 2j\pi \leq y \leq 2(j+i)\pi\}, & \mbox{otherwise}, \end{cases}$$ where $j \in \mathbb{Z}.$ 
\end{lemma}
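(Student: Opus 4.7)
The plan is to mimic the proof of Lemma~\ref{strips} nearly verbatim. The geometric sequence $(\sqrt{2}^{\,n+k+1})$ plays the role of the arithmetic sequence $((n+m+k)/2)$, while the first lemma of Section~6 (the Bergweiler analog of Lemma~\ref{Inm}) replaces Lemma~\ref{Inm} itself. The key algebraic ingredient is the shift identity for the exponential sequence: unfolding the definitions gives that if $f(z)\in I(f,(\sqrt{2}^{\,n+j+1}))$ and $|f(z)|\geq \sqrt{2}^{\,j+1}$, then $z\in I(f,(\sqrt{2}^{\,n+j}))$, since the first hypothesis handles $m\geq 2$ in the definition of the latter set and the second handles $m=1$. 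Taking the contrapositive (with $j$ chosen to align with the hypothesis and conclusion of our lemma), one deduces that $f(z)$ lies in the complement of the $I(f,\cdot)$ to which the Bergweiler Lemma~\ref{Inm}-analog can then be applied.

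Next I would dispose of the size hypothesis needed by the shift identity. Since $f(z)$ is assumed to lie outside $R_{k+1}=\{|x|<2^{k+3},\,|y|<2^{k+2}\pi\}$, a trivial bound gives $|f(z)|\geq 2^{k+3}$, which far exceeds the threshold $\sqrt{2}^{\,j+1}$ needed; this is essentially (\ref{squares2}) read in reverse. One then applies the Bergweiler analog of Lemma~\ref{Inm} with its parameter equal to $k+1$: this guarantees that the left half-plane $\{x\leq -2^{k+3}\}$, the right-going half-lines $\{x\geq 2^{k+3},\,y=2j\pi\}$ with $|j|<2^{k+1}$, and the full horizontal lines $\{y=2j\pi\}$ with $|j|\geq 2^{k+1}$ are all contained in the relevant $I(f,\cdot)$, so $f(z)$ avoids every one of them.

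A routine case analysis then traps $f(z)$ in one of the half-strips. Combined with $f(z)\notin R_{k+1}$, avoidance of the left half-plane rules out $\operatorname{Re}(f(z))<-2^{k+3}$, leaving two possibilities: either $\operatorname{Re}(f(z))\geq 2^{k+3}$, in which case avoidance of the short right-going half-lines pins $\operatorname{Im}(f(z))$ strictly between two consecutive multiples of $2\pi$ with $|j|<2^{k+1}$, yielding the first case of $S_{k+1,j}$; or $|\operatorname{Im}(f(z))|\geq 2^{k+2}\pi$, in which case avoidance of the long horizontal lines pins $\operatorname{Im}(f(z))$ strictly between two such lines with $|j|\geq 2^{k+1}$, yielding the \emph{otherwise} case. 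The main obstacle is the purely bookkeeping one of aligning the shift identity, the Bergweiler Lemma~\ref{Inm}-analog, and the thresholds $2^{k+3}$ and $2^{k+1}$ appearing in $S_{k+1,j}$; no new analytic input is needed beyond the techniques already developed in Section~3, which is why the paper omits the details.
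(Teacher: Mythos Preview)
Your approach is exactly what the paper intends: it explicitly omits the proof and says it is ``similar to the ones in Section~3, with minor modifications,'' and your proposal faithfully transcribes the proof of Lemma~\ref{strips} with the geometric sequence $(\sqrt{2}^{\,n+m})$ in place of $((n+m+k)/2)$ and the Bergweiler analogue of Lemma~\ref{Inm} in place of Lemma~\ref{Inm}.

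One bookkeeping point worth tightening: with the hypothesis as printed, $z\in I(f,(\sqrt{2}^{\,n+k+1}))^{c}$, your shift identity (with $j=k+1$) yields only $f(z)\in I(f,(\sqrt{2}^{\,n+k+2}))^{c}$, which would call for the Bergweiler--Lemma~\ref{Inm} analogue at parameter $m=k+2$, not $m=k+1$; the thresholds in $S_{k+1,j}$, however, match $m=k+1$. This is precisely the ``purely bookkeeping'' obstacle you flag, and it appears to originate in the paper's stated hypothesis rather than in your argument: the direct analogue of the hypothesis of Lemma~\ref{strips} (with $m=0$) is $z\in I(f,(\sqrt{2}^{\,n+k}))^{c}$, under which your choice $m=k+1$ aligns perfectly. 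Also, your appeal to \eqref{squares2} ``read in reverse'' is not quite the right citation---\eqref{squares2} gives $R_{k}\subset\{|z|<2^{k+3}\}$, which does not bound $|f(z)|$ from below when $f(z)\notin R_{k+1}$; the bound $|f(z)|\ge 2^{k+3}$ you need follows instead directly from the definition of $R_{k+1}$ and $\pi>2$.
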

Finally, we can show that Lemma \ref{modf} holds also whenever $\text{Re}(z)>4.$ 
\begin{lemma}
\label{modf2}
Let $f(z)= 2z+2-\log 2 - e^z.$ If $\textnormal{Re}(z) \geq \lvert z \rvert /2$ and $\textnormal{Re}(z) \geq 4,$ then  
\begin{equation}
\label{doubleineq2}
\frac{1}{2} e^{\textnormal{Re}(z)} \leq \lvert f(z) \rvert \leq  2e^{\textnormal{Re}(z)}.
\end{equation}
\end{lemma}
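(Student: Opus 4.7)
The plan is to mirror the proof of Lemma \ref{modf}, with the roles of $e^{-z}$ and $e^{z}$ interchanged, since now it is the term $-e^{z}$ that dominates when $\textnormal{Re}(z)$ is large and positive. First, I would write $f(z) = -e^{z} + (2z + 2 - \log 2)$ and apply the triangle inequality in both directions to separate the exponential term, whose modulus equals $e^{\textnormal{Re}(z)}$, from the linear remainder $2z + 2 - \log 2$. This reduces both inequalities in (\ref{doubleineq2}) to a single control on that remainder against $e^{\textnormal{Re}(z)}/2$.

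The remainder is easy to bound: $|2z + 2 - \log 2| \leq 2|z| + 2$, and invoking the hypothesis $\textnormal{Re}(z) \geq |z|/2$ upgrades this to $|2z + 2 - \log 2| \leq 4\,\textnormal{Re}(z) + 2$. Substituting back, both inequalities in (\ref{doubleineq2}) reduce to the elementary claim that $e^{t}/2 \geq 4t + 2$ for all $t \geq 4$, which is the analogue of the estimate $e^{t}/2 \geq 1 + 2t$ for $t \geq 3$ used in Lemma \ref{modf}. One verifies this at $t = 4$ by inspection (the left side exceeds $27$ while the right side equals $18$) and then observes that the derivative $e^{t}/2$ already dominates $4$ well before $t = 4$, so the inequality persists for all larger $t$ and both halves of (\ref{doubleineq2}) follow.

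There is essentially no mathematical obstacle here: the proof is a routine adaptation of Lemma \ref{modf}. The only genuine choice is the threshold value $4$, which needs to be comfortably above the point at which the exponential pulls ahead of the linear terms, and yet small enough to be convenient downstream. Since the scales at which the curves $\gamma_k$ will be constructed in the Bergweiler analogue of the Section 4 argument are of order $2^{k+2}$, the hypothesis $\textnormal{Re}(z) \geq 4$ imposes no real restriction on the application in the proof of Theorem \ref{BW}.
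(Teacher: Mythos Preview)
Your proposal is correct and is precisely the adaptation of Lemma~\ref{modf} that the paper has in mind; indeed the paper does not write out a proof at all, merely stating that the argument is ``similar to the one[s] in Section~3, with minor modifications.'' Your reduction to the elementary estimate $e^{t}/2 \geq 4t+2$ for $t\geq 4$ is exactly the analogue of the inequalities $e^{t}/2 \geq 1+2t$ and $e^{t} \geq 1+2t$ used there.
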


Now we are ready to present an argument similar to the one we used in the proof of Theorem \ref{bdcomp}. 
\begin{proof}
We show that all the hypotheses of Theorem \ref{strongsw} are satisfied and then the result will follow. The disc $D(0, \sqrt{2}^{\,n})$ contains the fixed point at $\log 2$ for all $n \in \mathbb{N}.$ Hence, it suffices to show that $I(f, (\sqrt{2}^{\,n}))^c$ has no unbounded components.

Suppose that there exists an unbounded component $U$ of the complement of $I(f,(\sqrt{2}^{\,n}).$ As before, we construct a curve $\gamma_0 \subset U$ and by considering the images $f^k(\gamma_0)$, $k \geq 0$, we deduce, by Lemma \ref{R+S}, that $\gamma_0$ contains a point of $I(f,(\sqrt{2}^{\,n}))$, giving a contradiction. More precisely, we can show that, for some $k \geq 0,$ we have a curve $\gamma_k \subset f(\gamma_{k-1})$ which lies in some $S_{k,j_k}$ and joins the points $z_{k,1},z_{k,2}$ such that:
\begin{itemize}
\item[(a)]$\text{Re}(z_{k,1})=a_{k}>2^{k+2}$;
\item[(b)] $\text{Re}(z_{k,2})= a_k+10;$
\item[(c)]$\gamma_k \subset \{z: a_k \leq \text{Re}(z) \leq a_k+10\} \cap S_{k,j_k};$ 
\item[(d)]$\text{Re}(z) \geq \lvert z\rvert /2$ for any $z \in \gamma_k.$
\end{itemize}
Then, by (\ref{squares2}), $f(\gamma_{k})$ lies outside $R_{k+1}$ and so, by Lemma \ref{strips2}, it must lie in some half-strip $S_{k+1,j_{k+1}}$ and so, by using Lemma \ref{modf2}, we can construct a curve $\gamma_{k+1} \subset f(\gamma_k)$ satisfying (a)-(d) with $k$ replaced by $k+1$.
 
As $\gamma_{k+1} \subset f(\gamma_{k}),$ for $k \geq 0$, it follows from  Lemma \ref{R+S} that there exists $\zeta \in \gamma_0$ such that $f^k(\zeta) \in \gamma_{k+1}$, for all $k \geq 0.$ Hence, by (\ref{discineq}) $$\lvert f^k(\zeta)\rvert > 2^{k+4}> \sqrt{2}^{\,k},$$
 and so $\zeta \in I(f, (\sqrt{2}^{\,n}))$, which gives us a contradiction.
 
 Hence, by Theorem \ref{strongsw}, we deduce that $I(f, (\sqrt{2}^{\,n}))$ contains a spider's web and hence, by \cite[Lemma 4.5]{BEconj} , $I(f)$ is a spider's web.
\end{proof}

\end{document}